\crefname{enumi}{}{} 
\newcounter{maincounter}[section] 
\newtheorem{theorem}[maincounter]{Theorem}
\newtheorem{lemma}[maincounter]{Lemma}
\newtheorem{proposition}[maincounter]{Proposition}
\newtheorem{corollary}[maincounter]{Corollary}
\theoremstyle{remark}
\newtheorem{remark}[maincounter]{Remark}
\theoremstyle{definition}
\newtheorem{definition}[maincounter]{Definition}
\newcommand{\N}{\ensuremath{\mathbb N}} 
\newcommand{\Z}{\ensuremath{\mathbb Z}} 
\newcommand{\Q}{\ensuremath{\mathbb Q}} 
\newcommand{\R}{\ensuremath{\mathbb R}} 
\providecommand{\C}{}
\renewcommand{\C}{\ensuremath{\mathbb C}} 
\newcommand{\F}{\ensuremath{\mathbb F}} 
\newcommand{\defeq}{\ensuremath{\coloneqq}}
\newcommand\rk{\ensuremath{\operatorname{rk}}} 
\newcommand{\FS}{\ensuremath{\operatorname{FS}}} 
\newcommand{\im}{\ensuremath{\operatorname{Im}\,}} 
\newcommand{\Hom}{\ensuremath{\operatorname{Hom}}} 
\newcommand{\cyclic}[1]{\ensuremath{\Z / #1\Z}}		
\newcommand{\cyclicstar}[1]{\ensuremath{(\Z / #1\Z)^\times}} 
\newcommand{\emptyparam}{\ensuremath{\,\cdot\,}}
\newcommand{\restricts}[2] {
	#1 
	\raisebox{-.3ex}{$|$}_{#2}
}
\DeclareMathOperator{\Span}{span}
\DeclareMathOperator{\supp}{supp}
\begin{document}

\begin{frontmatter}[classification=text]


\author[fede]{Federico Glaudo\thanks{supported by the National Science Foundation under Grant No. DMS--1926686}}
\author[noah]{Noah Kravitz\thanks{supported in part by an NSF Graduate
Research Fellowship (grant DGE--2039656)}}

\begin{abstract}
For a finite multiset $A$ of an abelian group $G$, let $\text{FS}(A)$ denote the multiset of the $2^{|A|}$ subset sums of $A$.  
It is natural to ask to what extent $A$ can be reconstructed from $\text{FS}(A)$.  We fully solve this problem for $2$-torsion-free groups $G$ by giving characterizations, both algebraic and combinatorial, of the fibers of $\text{FS}$.
Equivalently, we characterize all pairs of multisets $A,B$ with $\text{FS}(A)=\text{FS}(B)$.
Our results build on recent work of Ciprietti and the first author.
\end{abstract}
\end{frontmatter}

\section{Introduction}

\subsection{The reconstruction problem}
Let $G$ be an abelian group.  Given a finite multiset\footnote{See Section 2.1 of \cite{CipriettiGlaudo2023} for a review of notation and terminology for multisets.} $A$ of $G$, we write $\sum A$ for the sum of the elements of $A$, counted with multiplicity.  (In this paper, all multisets are finite.)  If $A$ is a multiset of $G$, then its \emph{multiset of subset sums}
$$\FS(A)\defeq \big\{\sum A': A' \subseteq A\big\}$$
is the multiset consisting of the $2^{|A|}$ subset sums of $A$.  It is natural to ask whether $A$ can be uniquely reconstructed from $\FS(A)$.  The answer in general is ``no'': For instance, if $\sum A=0$, then $\FS(A)=\FS(-A)$.  More generally, if $\sum A=0$, then for any $B$ we have $\FS(A \cup B)=\FS((-A) \cup B)$.

Ciprietti and the first author \cite{CipriettiGlaudo2023} recently showed that for a large class of abelian groups $G$, the multiset $A$ can be reconstructed from $\FS(A)$ ``up to'' these symmetries of flipping the signs of a zero-sum subset.  Let $O_{\FS}$ denote the set of odd natural numbers $n$ such that $(\cyclic{n})^\times$ is (multiplicatively) generated by the elements $2, -1$.

\begin{theorem}[{\cite[Theorem 1.1]{CipriettiGlaudo2023}}]\label{thm:old}
Let $G$ be an abelian group such that every element with finite order has order in $O_{\FS}$, and let $A,B$ be finite multisets of $G$.  Then $\FS(A)=\FS(B)$ if and only if $B$ can be obtained from $A$ by flipping the signs of a zero-sum subset of $A$.
\end{theorem}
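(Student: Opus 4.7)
The ``if'' direction is a direct bijection. If $A$ decomposes as a multiset union $A = A_0 \cup A_1$ with $\sum A_0 = 0$ and $B = (-A_0) \cup A_1$, then the assignment $A'_0 \cup A'_1 \mapsto (-(A_0 \setminus A'_0)) \cup A'_1$ (where $A'_i \subseteq A_i$) is a bijection $2^A \to 2^B$ preserving subset sums, since $-(\sum A_0 - \sum A'_0) + \sum A'_1 = \sum A'_0 + \sum A'_1$.

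For the nontrivial direction, I would pass to the integral group algebra. Let $P_A \defeq \prod_{a \in A}(1+[a]) \in \Z[G]$; expanding the product gives $P_A = \sum_{A' \subseteq A}[\sum A']$, so $\FS(A)=\FS(B)$ is equivalent to $P_A = P_B$. Replacing $G$ by the subgroup generated by $A \cup B$, I may assume $G \cong \Z^r \oplus T$ where $T$ is a finite abelian group all of whose element orders lie in $O_\FS$ (in particular, $T$ has odd order). The key identity $1 + [-a] = [-a](1+[a])$ in $\Z[G]$ then implies: for any bijection $f\colon A \to B$ with $f(a) \in \{a,-a\}$, setting $A_0 \defeq \{a \in A : f(a) = -a\}$ yields $P_B = [-\sum A_0]\,P_A$, and so $P_A = P_B$ forces $\sum A_0 = 0$. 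The problem thus reduces to exhibiting such a bijection, equivalently to showing that $A$ and $B$ have the same multiset of $\pm$-orbits.

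To extract the orbit multiset, I would use the involution $\overline{(\cdot)}\colon [g] \mapsto [-g]$ on $\Z[G]$ and consider $Q_A \defeq P_A \cdot \overline{P_A} = \prod_{a \in A}(2 + [a] + [-a])$. By construction $Q_A$ depends only on the orbit multiset of $A$, and $P_A = P_B$ immediately gives $Q_A = Q_B$. Decomposing $\C[G] \cong \prod_\psi \C[x_1^{\pm 1},\dots,x_r^{\pm 1}]$ over characters $\psi$ of $T$, one computes that the $\psi$-slice of each factor $2 + [a] + [-a]$ equals $\psi(a_T)^{-1} x^{-a_\Z}\bigl(1 + \psi(a_T)x^{a_\Z}\bigr)^2$, i.e.\ a unit times the square of $1 + \psi(a_T)x^{a_\Z}$. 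Unique factorization in the Laurent polynomial ring then determines, for each $\psi$, the multiset $\{(\psi(a_T), a_\Z) : a \in A\}$ up to the involution $(c,\mathbf{v}) \mapsto (c^{-1},-\mathbf{v})$.

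The remaining, and principal, difficulty is to upgrade these per-character matchings into a single globally consistent sign-pattern yielding $f$. This is precisely where the hypothesis $O_\FS$ must be invoked: the Galois group $(\Z/n\Z)^\times$ acts on the characters of $T$ of order dividing $n$ via $\psi \mapsto \psi^m$, and by assumption is generated by Frobenius ($m = 2$, linked to the $\FS$-preserving operation $A \mapsto 2A$) together with inversion ($m = -1$, intrinsic to the definition of $Q$). I expect the gluing to proceed by an induction on the exponent of $T$ (or on $|A|$), using these Galois symmetries to propagate a consistent sign choice across the various $\psi$-slices and thereby produce the desired $f$. This gluing is the main technical hurdle, and the hypothesis $O_\FS$ is genuinely necessary: relaxing it produces honest counterexamples to the characterization.
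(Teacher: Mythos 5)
Your reduction of the ``only if'' direction to showing that $A$ and $B$ have the same multiset of $\pm$-orbits is fine (the hypothesis forces all torsion to be odd, so comparing $\sum\FS(A)$ with $\sum\FS(B)$ does pin down $\sum A_0=0$ once a sign-bijection $f$ exists). But both steps meant to establish that orbit equality have genuine holes. The unique-factorization step degenerates precisely where it matters: when $G$ is finite you have $r=0$, the ring $\C[x_1^{\pm1},\dots,x_r^{\pm1}]$ collapses to $\C$, and the $\psi$-slice of $Q_A$ is just the scalar $\prod_a|1+\psi(a)|^2$, with no factorization to read off. Finite groups are the crux of the problem --- \cref{lem:v-nontorsion-decomposition} makes explicit that all of the nontrivial content of $V(G)$ lives in the torsion subgroup --- so the method as written never gets started on the central case. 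Even when $r\ge1$, the factors $1+c\,x^{v}$ are reducible whenever $v$ is imprimitive (e.g.\ $1+x^2=(1+ix)(1-ix)$ in $\C[x^{\pm1}]$), so the per-slice extraction already needs more care than you assert.

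The second, deeper hole is the gluing, which you explicitly leave open. This is not a technical afterthought: per-character data alone is compatible with many non-sign-related pairs $(A,B)$, and upgrading local matchings to a global sign pattern is exactly where $n\in O_{\FS}$ must be consumed. ``I expect the gluing to proceed by induction'' is not a proof, and nothing in the sketch indicates why the induction would close; since the statement is \emph{false} for $n\notin O_{\FS}$, any argument must do real work at this point. For comparison, the paper does not prove \cref{thm:old} directly at all: it proves the stronger \cref{thm:main,thm:main-no-shift} via the Fourier-algebraic map $\widehat{\FS}$, the rank count for groups of cyclotomic units (\cref{lem:fshat-surjective,lem:rk-computation} together with \cref{cor:dimension-v}), and the discrete Radon transform to pass from $\cyclic{n}$ to $(\cyclic{n})^r$ and then to general $G$; \cref{thm:old} then drops out in a few lines in Section~1.3, because $n\in O_{\FS}$ is equivalent to $U_n\sqcup(-U_n)=\cyclicstar{n}$, so the coset-swap moves decompose into sign flips, and $\sum A=\sum B$ gives the zero-sum condition. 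Your group-algebra framing is a reasonable way in, but as it stands it neither handles finite $G$ nor carries out the gluing, which is where the hypothesis actually enters.
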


In the same paper, they showed that if $G$ has any element with finite order not lying in $O_{\FS}$, then there exist multisets $A,B$ of $G$ such that $\FS(A)=\FS(B)$ but $B$ cannot be be obtained from $A$ by flipping the signs of a zero-sum subset.  This observation leads to a more general reconstruction problem: For an arbitrary abelian group $G$, up to which symmetries can $A$ be recovered from $\FS(A)$?  The purpose of the present paper is to provide a complete answer to this question for groups with no $2$-torsion.

\subsection{Main results}
There are several easily-described ``moves'' whose application to the multiset $A$ preserves $\FS(A)$ up to replacing it by some shift $\FS(A)+s$.  Our main result will show that, when the group $G$ has no $2$-torsion, these moves in fact generate all of the multisets $B$ satisfying $\FS(A)=\FS(B)+s$ for some $s \in G$. 
 It will then be easy to determine which such sequences of moves lead to multisets $B$ with $s=0$.

\begin{definition}\label{def:u}
    For $n\ge 3$ an odd natural number, choose the smallest $k \ge 1$ such that $2^k$ is equivalent to $-1$ or $1$ modulo $n$, and define the set $U_n\defeq \{1,2,4,\ldots, 2^{k-1}\} \subseteq \cyclic{n}$.
\end{definition}

\begin{definition}\label{def:moves}
    Let $G$ be an abelian group with no elements of order $2$, and let $A$ be a finite multiset of $G$.  We define the following two types of \emph{moves}:
    \begin{itemize}
        \item $g\leadsto -g$: For an element $g\in A$, replace $g$ with $-g$; the set $A$ becomes $A\cup\{-g\}\setminus\{g\}$.
        \item $\iota(\alpha U_n)\leadsto \iota(\beta U_n)$: For $n\ge 3$ odd, $\alpha,\beta\in\cyclicstar{n}$, and an embedding $\iota:\cyclic{n}\to G$ such that $\iota(\alpha U_n)\subseteq A$, replace the subset $\iota(\alpha U_n)$ of $A$ with $\iota(\beta U_n)$; the set $A$ becomes $A\cup \iota(\beta U_n)\setminus\iota(\alpha U_n)$.
    \end{itemize}
\end{definition}

A quick computation (see \cref{lem:fs-of-moves}) shows that these moves preserve $\FS(A)$ up to a shift. 

Before we can state our main results, we need one further definition.

\begin{definition}\label{def:v}
    Given an abelian group $G$ with no elements of order $2$, define
    \begin{equation*}
        V(G) \defeq  
        \left\{\mu:G\to\Z :\, 
        \begin{aligned}
           &\mu(g)+\mu(-g)=\mu(2g) + \mu(-2g) \text{ for all $g\in G$},
           \\
           &\sum_{g\in H} \mu(g) = 0 \text{ for all subgroups $H\le G$},
           \\
           &\supp(\mu) \text{ is finite}
        \end{aligned}
        \right\}.
    \end{equation*}
\end{definition}

Our main theorem is the following three-way equivalence.  The \emph{multiplicity function} $\mu_S: G \to \N$ of a finite multiset $S$ of $G$  is the function whose value at $g$ is the multiplicity of $g$ in $S$.

\begin{theorem}\label{thm:main}
Let $G$ be an abelian group with no elements of order $2$, and let $A,B$ be finite multisets of $G$.  Write $\mu\defeq \mu_A-\mu_B$.  Then the following are equivalent:
\begin{enumerate}[label=(\roman*)]
    \item $\FS(A)=\FS(B)+s$ for some $s \in G$.
    \item $\mu\in V(G)$.
    \item There is a sequence of multisets $A=E_0, E_1, E_2, \dots, E_m=B$ such that each
    $E_i$ can be transformed into $E_{i+1}$ by applying one of the moves described in \cref{def:moves}. Moreover, for each $i$ we have $\FS(E_{i+1})=\FS(E_i)+s_i$ for some $s_i\in G$.
\end{enumerate}
\end{theorem}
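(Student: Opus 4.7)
I establish the theorem by proving the cycle of implications (iii) $\Rightarrow$ (i) $\Rightarrow$ (ii) $\Rightarrow$ (iii). The implication (iii) $\Rightarrow$ (i) is immediate from \cref{lem:fs-of-moves}: each individual move shifts $\FS$ by some $s_i \in G$, and composing these shifts gives $\FS(A) = \FS(B) + s$ for a suitable $s$.

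For (i) $\Rightarrow$ (ii), I transcribe the hypothesis as $f_A(X) = X^s f_B(X)$ in the group ring $\Z[G]$, where $f_A(X) \defeq \prod_{a \in A}(1 + X^a)$. Finite support of $\mu \defeq \mu_A - \mu_B$ is clear. To obtain the doubling condition, I eliminate the shift by multiplying the identity with its image under $X \mapsto X^{-1}$, producing
\[
\prod_{a \in A}\bigl(2 + X^a + X^{-a}\bigr) \;=\; \prod_{b \in B}\bigl(2 + X^b + X^{-b}\bigr).
\]
Combined with the factorization $2 + X^g + X^{-g} = X^{-g}(1+X^g)^2$ (valid because $G$ has no $2$-torsion), careful bookkeeping of the multiplicities of each factor $(1+X^g)$ under the doubling map $g \mapsto 2g$ yields the relation $\mu(g) + \mu(-g) = \mu(2g) + \mu(-2g)$. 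The subgroup condition is obtained by projecting $f_A = X^s f_B$ along the quotient $\pi: G \to G/H$ and combining with character computations on $G/H$ to conclude $|A \cap H| = |B \cap H|$ for every subgroup $H \le G$.

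The implication (ii) $\Rightarrow$ (iii) is the heart of the proof. I proceed by induction on the $\ell^1$-norm $\|\mu\|_1 \defeq \sum_g |\mu(g)|$: given $\mu \in V(G)$ with $\mu \ne 0$, I produce a single move applicable to the current multiset $A$ that yields $A'$ with $\mu' \defeq \mu_{A'} - \mu_B \in V(G)$ and $\|\mu'\|_1 < \|\mu\|_1$. The case split is organized by the decomposition $\mu = \tfrac{1}{2}(\nu + \omega)$ into its symmetric part $\nu(g) \defeq \mu(g) + \mu(-g)$ and antisymmetric part $\omega(g) \defeq \mu(g) - \mu(-g)$; the doubling condition forces $\nu$ to vanish on elements of infinite order and to be constant on each $\langle 2\rangle$-orbit in the torsion part of $G$. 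When $\nu \equiv 0$, the subgroup condition supplies an element $g$ whose multiplicities in $A$ and $B$ differ in the antisymmetric way needed to apply the first move $g \leadsto -g$, reducing $\|\mu\|_1$. When $\nu \not\equiv 0$, I pick a torsion element $g$ of order $n$ where $\nu$ is nonzero; the combined effect of the doubling condition and the subgroup condition (applied to the subgroups of $\iota(\cyclic n)$) forces $A$ to contain a full copy of some $\iota(\alpha U_n)$, permitting a move of the second type $\iota(\alpha U_n) \leadsto \iota(\beta U_n)$.

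The principal obstacle is this final case: the conditions defining $V(G)$ are algebraic constraints on the \emph{difference} $\mu_A - \mu_B$, whereas a move of the second type requires the specific multiset $\iota(\alpha U_n)$ to sit inside $A$ itself. Translating $\mu \in V(G)$ into a positivity statement about $\mu_A$ of exactly this form—and selecting the correct pair $(\alpha, \beta)$ so that the resulting $\mu'$ remains in $V(G)$ with strictly smaller complexity—is the main technical work. The defining property of $U_n$ as a ``half'' of the $\langle 2, -1\rangle$-orbit of $1$ in $\cyclicstar{n}$ is what makes the correct choice available, and a careful matching of the $\langle 2\rangle$-orbit structure of $\nu$ with the combinatorics of the sets $\alpha U_n$ will be the key step.
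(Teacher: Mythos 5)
Your outline of the logical cycle and the immediate direction (iii)$\Rightarrow$(i) are fine, but both of the non-trivial implications have genuine gaps.

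\textbf{(i)$\Rightarrow$(ii).} This is where the real work lives, and the proposed argument does not do it. Multiplying $\prod_a(1+X^a)=X^s\prod_b(1+X^b)$ by its image under $X\mapsto X^{-1}$ and using $2+X^g+X^{-g}=X^{-g}(1+X^g)^2$ just reproduces the square of the original identity (note $\sum A - \sum B = 2s$), so no new information is extracted. More fundamentally, ``bookkeeping of the multiplicities of each factor $(1+X^g)$'' presupposes a unique-factorization statement that is false in $\Z[G]$: there are genuine multiplicative relations among the elements $1+X^g$ beyond the obvious ones, and characterizing them is precisely the content of the doubling condition. For example, in $\cyclic{7}$ one has $(1+\omega_7)(1+\omega_7^2)(1+\omega_7^4)=1$, which is exactly the relation encoded by $U_7$; proving that \emph{all} such relations are generated by the two move types requires a rank computation for the group generated by cyclotomic units, which rests on nontrivial input (Washington, Thms.~4.12 and 8.3, via Dirichlet's unit theorem). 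The paper's proof passes through the Fourier-transformed map $\widehat\FS$, the finite-index image lemma (\cref{lem:fshat-surjective}), the rank bound of \cref{lem:rk-computation}, and then bootstraps from $\cyclic{n}$ to $(\cyclic{n})^r$ via the Radon inversion formula (\cref{thm:radon-inversion}), to finite groups by subgroup restriction, to finitely generated groups by an induction on $\Z$-factors using slicing and \cref{lem:remove-common-summand}, and finally to all groups. None of this is replaceable by a group-ring manipulation. The ``character computations on $G/H$'' sketch for the subgroup condition is similarly unsubstantiated --- naive parity arguments already fail, since $\prod_{g\in S}(1+X^g)$ can vanish identically in $\F_2[G]$ (e.g., $S=(\cyclic{3})^2\setminus\{0\}$), so one cannot read off $\mu_A(0)=\mu_B(0)$ from divisibility of multiplicities.

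\textbf{(ii)$\Rightarrow$(iii).} The induction on $\|\mu\|_1$ is the right shape, but the key claim in the case $\nu\not\equiv 0$ is false as stated. The condition $\mu(g)+\mu(-g)=t$ with $t>0$ does \emph{not} force $\mu(g)>0$ for all $g$ in a $U_n$-coset; for instance $\mu(g)=3,\ \mu(-g)=-2$ is consistent with $t=1$, so $A$ need not contain any full copy of $\iota(\alpha U_n)$ until one first normalizes. The paper handles this by first applying sign-flip moves $g\leadsto -g$ to both sides until $\mu$ is supported on the cosets $g_iU_n$ (vanishing on $-g_iU_n$), at which point positivity of $t_\alpha$ genuinely implies $g_\alpha U_n\subseteq A$; only then does a type-two move strictly decrease $\|\mu\|_1$. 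Your symmetric/antisymmetric decomposition $\mu=\tfrac12(\nu+\omega)$ is a reasonable organizing tool but is weaker than the paper's cyclic-support decomposition (\cref{lem:v-decomposition}), which localizes the entire problem to a single $\tilde V(\cyclic{n})$; without that localization it is not clear how to \emph{choose} $n$, $\iota$, and the pair $(\alpha,\beta)$ so that the residual difference stays in $V(G)$. You should either supply the normalization step and the choice explicitly, or adopt the decomposition approach.

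=== DONE ===
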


With the characterization from \cref{thm:main} in hand, we can quickly specialize to pairs of multisets whose multisets of subset sums are equal (with no shift).

\begin{theorem}\label{thm:main-no-shift}
Let $G$ be an abelian group with no elements of order $2$, and let $A,B$ be finite multisets of $G$.  Write $\mu\defeq \mu_A-\mu_B$.  Then the following are equivalent:
\begin{enumerate}[label=(\roman*)]
    \item $\FS(A)=\FS(B)$.
    \item $\mu\in V(G)$ and $\sum_{g\in G} \mu(g)g=0$.
    \item There is a sequence of multisets $A=E_0, E_1, E_2, \dots, E_m=B$ such that each
    $E_i$ can be transformed into $E_{i+1}$ by applying one of the moves described in \cref{def:moves}.  Moreover, for each $i$ we have $\FS(E_{i+1})=\FS(E_i)+s_i$ for some $s_i\in G$, and $s_0+s_1+\cdots + s_{m-1}=0$.
\end{enumerate}
\end{theorem}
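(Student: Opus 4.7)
The plan is to deduce \cref{thm:main-no-shift} directly from \cref{thm:main} by pinning down the shift $s$. The central observation is that, in a $2$-torsion-free group, the shift appearing in condition (i) of \cref{thm:main} is uniquely determined by $A$ and $B$, and vanishes precisely when $\sum_{g} \mu(g)\, g = 0$.

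The main step is the following lemma: if $\FS(A) = \FS(B) + s$, then $|A| = |B| =: n$ (forced because $|\FS(A)| = |\FS(B)|$) and $2s = \sum A - \sum B$. To see this, I would sum all elements of $\FS(A)$ and of $\FS(B)+s$; since each element of a size-$n$ multiset lies in exactly $2^{n-1}$ subsets, one gets $2^{n-1} \sum A = 2^{n-1} \sum B + 2^n s$. The $2$-torsion-free hypothesis, applied iteratively, allows cancellation of the factor $2^{n-1}$, yielding $\sum A - \sum B = 2s$. Because $\sum A - \sum B = \sum_{g} \mu(g)\, g$, the shift $s$ (when it exists) is uniquely determined, and $s = 0$ if and only if $\sum_g \mu(g)\, g = 0$.

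Given the lemma, the equivalence (i) $\Leftrightarrow$ (ii) is immediate: (i) here is (i) of \cref{thm:main} with the extra requirement $s = 0$, and \cref{thm:main} gives (i) of that theorem $\Leftrightarrow \mu \in V(G)$; the lemma translates $s = 0$ into the additional condition $\sum_g \mu(g)\, g = 0$. For (ii) $\Leftrightarrow$ (iii), note that any sequence of moves $A = E_0, E_1, \dots, E_m = B$ as in \cref{thm:main}(iii) telescopes to $\FS(B) = \FS(A) + \sum_i s_i$, so the total shift $\sum_i s_i$ equals $-s$ where $s$ is the shift of condition (i). By the lemma, $\sum_i s_i = 0$ if and only if $\sum_g \mu(g)\, g = 0$, and the rest follows from the (ii) $\Leftrightarrow$ (iii) part of \cref{thm:main}.

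The only subtlety, and the sole place where the $2$-torsion-free hypothesis enters beyond its role in \cref{thm:main} itself, is the cancellation of the $2^{n-1}$ factor in the lemma. Without this cancellation the shift $s$ would not be uniquely determined, so the supplementary condition $\sum_g \mu(g)\, g = 0$ would not correctly characterize the no-shift case. Aside from this, the argument is a clean extraction of information already packaged in \cref{thm:main}.
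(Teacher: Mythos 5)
Your proposal is correct and follows essentially the same route as the paper: both deduce \cref{thm:main-no-shift} from \cref{thm:main} by noting that $\sum \FS(A) = 2^{q-1}\sum A$ (with $q=|A|=|B|$), so that $2^{q-1}\sum A = 2^{q-1}\sum B + 2^q s$, and then use the absence of $2$-torsion to conclude that $s=0$ if and only if $\sum A = \sum B$. The paper states the conclusion without isolating the intermediate identity $2s=\sum A-\sum B$, but this is a purely cosmetic difference.
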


In our proof of \cref{thm:main}, we will establish the three implications (i)$\implies$(ii), (ii)$\implies$(iii), and (iii)$\implies$(i).  For the implication (i)$\implies$(ii), which is the most difficult, our strategy is a refinement of the approach in \cite{CipriettiGlaudo2023}.  We will start with the case where $G$ is a finite cyclic group; here, after a Fourier-analytic reduction, we will use arguments about the ranks of groups of cyclotomic units.  We will then use the discrete Radon transform to extend to the case where $G$ is of the form $(\cyclic{n})^r$, and finally we will treat the case of general $G$.  We emphasize that two important technical ingredients in the implication (i)$\implies$(ii), namely, \cref{lem:fshat-surjective} (that the image of $\widehat \FS$ has full rank) and \cref{thm:radon-inversion} (the inversion formula for the Radon transform), previously appeared in \cite{CipriettiGlaudo2023}; we are able to use these tools as black boxes.  The implication (ii)$\implies$(iii), which was trivial in the setting considered in \cite{CipriettiGlaudo2023}, follows from an in-depth study of the $\Z$-module $V(G)$.
For both of these implications, we will make use of several equivalent descriptions of $V(G)$;
this attention to the properties of the ``kernel'' of $\FS$ is the site of many of the new ideas of this paper.
The implication (iii)$\implies$(i) is immediate.

\cref{thm:main,thm:main-no-shift} completely resolve our main question for groups $G$ with no elements of order $2$.  We briefly mention how different behavior can appear when $G$ has $2$-torsion.  The simplest example of what can go ``wrong'' is visible for the group $G=(\cyclic{2})^r$: Here, one can show that $\FS(A)=\FS(B)$ if and only if $|A|=|B|$ and $\Span(A)=\Span(B)$.\footnote{The concept of span is induced by the $\F_2$-linear structure of $(\cyclic{2})^r$.}  This condition appears not to be expressible in terms of properties of $\mu_A-\mu_B$ alone---so not only do the proof techniques of the present paper break down in the presence of $2$-torsion, but one should expect the answer to the main question to have a different ``shape''.

\subsection{Related work}
Let us explain how \cref{thm:main-no-shift} implies the main result of \cite{CipriettiGlaudo2023}.  Suppose $G$ is an abelian group where every element of finite order has order in the set $O_{\FS}$.  This condition is equivalent to the assertion that all embedded $\cyclic{n}$'s in $G$ have $n \in O_{\FS}$.  In particular, for such an $n \neq 1$, we have $U_n \sqcup -U_n=\cyclicstar{n}$, and so moves of the second type in \cref{def:moves} can be obtained using only moves of the first type.  Thus, (iii) of \cref{thm:main-no-shift} tells us that if $\FS(A)=\FS(B)$, then $B$ and be obtained from $A$ by flipping the signs of some subset, and the condition $\sum_{g\in G}\mu(g)g=0$ (i.e., $\sum A=\sum B$) from (ii) ensures that the flipped subset has sum zero.

In a recent paper, Gaitanas \cite{Gaitanas2023} studied the problem of classifying the multisets $A$ of $\cyclicstar{n}$ such that $\FS(A)\setminus\{0\}$ is uniformly distributed on $\cyclic{n}$.  He showed that, for some particular odd values of $n$, such a set $A$ must be a union of sets of the form $\alpha\{\pm 1, \pm 2, \pm 4, \dots, \pm 2^{r-1}\}$, where $r$ is either the multiplicative order of $2$ modulo $n$ or half of the order,\footnote{The statement of \cite[Theorem 2.3]{Gaitanas2023} claims that one can choose $r$ equal to the order of $2$, but this is a mistake. The proof works if one allows $r$ to be half of the order.} and the signs $\pm$ can be chosen independently.  Our \cref{thm:main-no-shift} implies this result for any odd $n\ge 1$. Indeed, if $\FS(A)\setminus\{0\}$ is equidistributed, then $n$ divides $ 2^{|A|}-1$ and thus the multiplicative subgroup generated by $2$ has order dividing $|A|$.  Let $B$ be the multiset consisting of $|A|/|\langle 2 \rangle_{\cyclicstar{n}}|$ copies of $\langle 2 \rangle_{\cyclicstar{n}}$.  An easy computation shows that $\FS(A)=\FS(B)$, and then condition (iii) of \cref{thm:main-no-shift} implies the desired constraints on $A$.  We also remark that Gaitanas pointed out an interesting connection with questions about the linear dependences among logarithms of sines of rational multiples of $\pi$.

For $t \geq 2$ a natural number, the \emph{reconstruction problem for $t$-subset sums} asks if one can reconstruct $A$ from the multiset of the $\binom{|A|}{t}$ sums of its size-$t$ subsets.  This problem has been studied extensively \cite{SelfridgeStraus1958,GordonFraenkelStraus1962,BomanLinusson1996,AmdeberhanZeleke1997}, and for a more thorough literature review the reader should consult the introduction of \cite{CipriettiGlaudo2023} or the surveys \cite{fomin2019,Nathanson2008}.

\subsection{Organization of the paper}
In \cref{sec:V(g)}, we gather several facts about the $\Z$-modules $V(G)$; phrasing these facts in more abstract language will be very convenient for applications in the remainder of the paper.  In \cref{sec:finite-cyclic}, we prove the implication (i)$\implies$(ii) of \cref{thm:main} for finite cyclic groups.  In \cref{sec:radon}, we prove this implication for all abelian groups.  Finally, in \cref{sec:final}, we put everything together and deduce \cref{thm:main,thm:main-no-shift}.

\section{\texorpdfstring{Properties of the $\Z$-modules $V(G)$}{Properties of the Z-module V(G)}}\label{sec:V(g)}
In this section we will collect several properties and characterizations of the $\Z$-modules $V(G)$ (see \cref{def:v}).  Throughout, we use the convention $\cyclicstar{1}=\cyclic{1}$.

\subsection{Decomposition lemmas}

The following ``decomposition lemmas'' will be useful for analyzing $V(G)$. 

\begin{definition}
For $n\ge 1$ an odd natural number, let
    \begin{equation*}
        \tilde V(\cyclic{n}) \defeq 
        \left\{\mu:\cyclicstar{n} \to\Z :\, 
        \begin{aligned}
           &\mu(g)+\mu(-g)=\mu(2g) + \mu(-2g) \text{ for all $g\in \cyclicstar{n}$},
           \\
           &\sum_{g\in \cyclicstar{n}} \mu(g) = 0
        \end{aligned}
        \right\}.
    \end{equation*}
\end{definition}

An abelian group is called a \emph{torsion group} if all of its elements have finite order.

\begin{lemma}\label{lem:v-decomposition}
    Let $G$ be an abelian \emph{torsion} group with no elements of order $2$.  Then the map $\mu\mapsto (\restricts{\mu}{H^{\times}})_{H\le G \text{ cyclic}}$ defines a ($\Z$-module) isomorphism
    \begin{equation*}
        V(G) \xrightarrow{\sim} \bigoplus_{H\le G \text{ cyclic}} \tilde V(H).
    \end{equation*}
\end{lemma}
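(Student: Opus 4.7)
The plan is to exploit the observation that every $g \in G$ generates a unique cyclic subgroup $\langle g \rangle$, and $g \in H^\times$ precisely when $H = \langle g \rangle$; this yields a partition $G = \bigsqcup_{H \le G \text{ cyclic}} H^\times$. Since $G$ is torsion, each cyclic $H$ (and hence each $H^\times$) is finite. Restriction $\mu \mapsto (\mu|_{H^\times})_H$ therefore trivially gives a $\Z$-module isomorphism from the space of finitely supported functions $G \to \Z$ to the external direct sum indexed by cyclic subgroups. What remains is to check that this bijection sends $V(G)$ onto $\bigoplus_H \tilde V(H)$.

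For the forward inclusion, fix $\mu \in V(G)$ and a cyclic subgroup $H \le G$. The no-$2$-torsion hypothesis forces $|H|$ to be odd, so multiplication by $2$ is a bijection of $H$ preserving $H^\times$; in particular, for $g \in H^\times$, all of $-g, 2g, -2g$ lie in $H^\times$, and the identity $\mu(g) + \mu(-g) = \mu(2g) + \mu(-2g)$ transplanted from $V(G)$ is exactly the first defining condition of $\tilde V(H)$. For the sum condition, I would induct on $|H|$. The base $H = \{0\}$ reduces to $\mu(0) = 0$, which comes from the subgroup axiom of $V(G)$ applied to the trivial subgroup. For the inductive step, the subgroup axiom applied to $H$ gives $\sum_{g \in H} \mu(g) = 0$; decomposing $H = \bigsqcup_{H' \le H \text{ cyclic}} (H')^\times$ and invoking the inductive hypothesis on every proper cyclic $H' \lneq H$ isolates the desired $\sum_{g \in H^\times} \mu(g) = 0$.

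For the reverse inclusion, given $(\mu_H)_H \in \bigoplus_H \tilde V(H)$, define $\mu: G \to \Z$ by $\mu(g) \defeq \mu_{\langle g \rangle}(g)$; its support is finite because only finitely many $\mu_H$ are nonzero and each $H^\times$ is finite. The first axiom of $V(G)$ is entirely local: at any $g$, setting $H = \langle g \rangle$, the elements $g, -g, 2g, -2g$ all lie in $H^\times$ (again by no $2$-torsion, since $\langle 2g \rangle = \langle g \rangle$), so the needed identity reduces to the corresponding identity for $\mu_H \in \tilde V(H)$. The subgroup axiom for an arbitrary $K \le G$ follows by the decomposition $K = \bigsqcup_{H \le K \text{ cyclic}} H^\times$: the total sum splits as $\sum_H \sum_{g \in H^\times} \mu_H(g)$, and each inner sum vanishes by assumption.

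There is no serious obstacle; the entire argument is organized around the partition $G = \bigsqcup_H H^\times$ and is essentially bookkeeping. The mildly subtle ingredient is the inductive conversion between sums over $H$ and sums over $H^\times$, and the substantive use of the no-$2$-torsion hypothesis is to ensure that multiplication by $2$ is an automorphism of each finite cyclic subgroup, which is what keeps the $2g$ terms inside $H^\times$ in both directions.
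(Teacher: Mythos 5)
Your argument is correct and follows the same overall structure as the paper's proof: the partition $G = \bigsqcup_{H \le G \text{ cyclic}} H^\times$, the resulting isomorphism on the ambient modules, and then checking that $V(G)$ maps onto $\bigoplus_H \tilde V(H)$. The one place where you deviate is the verification that $\sum_{g \in H^\times}\mu(g)=0$ in the forward direction: you run a strong induction on $|H|$, peeling off the proper cyclic subgroups via the same decomposition, whereas the paper computes it in one shot with M\"obius inversion, writing $\mathds{1}_{\cyclicstar{n}} = \sum_{d\mid n}\mu_{\text{m\"obius}}(d)\mathds{1}_{d\cyclic{n}}$. These are two phrasings of the same fact (M\"obius inversion on the divisor lattice is precisely such an induction), so the choice is stylistic; your version avoids introducing the M\"obius function, while the paper's gives a closed formula. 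You also explicitly justify the first axiom in the reverse direction by noting $\langle 2g\rangle=\langle g\rangle$ for odd-order $g$, a point the paper leaves implicit; that is a sound and slightly more careful remark.
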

\begin{proof}
Each element $g \in G$ generates a finite cyclic group $H_g$ which is of the form $\iota (\cyclic{n})$, where $n$ is the (odd) order of $g$ and $\iota$ is an embedding of $\cyclic{n}$ into $G$.  Clearly $g \in H^\times_g$.  
For any $g,g' \in G$, the sets $H^\times_g, H^\times_{g'}$ are either equal or disjoint, according to whether or not $H_g=H_{g'}$.  Hence we can write
    \begin{equation}\label{eq:g-decomposition}
        G=\bigsqcup_{H\le G \text{ cyclic}} H^\times,
    \end{equation}
and this partition induces an isomorphism $$\Phi:\Z^G\xrightarrow{\sim}\bigoplus_{H\le G \text{ cyclic}} \Z^{H^\times}$$ via
    \begin{equation*}
        \Phi(\mu) = (\restricts{\mu}{H^\times})_{H\le G \text{ cyclic}} .
    \end{equation*}

    We claim that $\Phi$ restricts to an isomorphism from $V(G)$ to the direct sum
    $\oplus_{H\le G \text{ cyclic}} \tilde V(H)$.
    Let us show that if $\mu\in V(G)$, then $\restricts{\mu}{H^\times}\in \tilde V(H)$ for all cyclic subgroups $H\le G$.
    First, for any $g\in H^\times$, one has $\restricts{\mu}{H^\times}(g) + \restricts{\mu}{H^\times}(-g) = \mu(g)+\mu(-g) = \mu(2g)+\mu(-2g) = \restricts{\mu}{H^\times}(2g) + \restricts{\mu}{H^\times}(-2g)$.  
    Next, to show that $\sum_{g \in H^\times}\restricts{\mu}{H^\times}(g)=0$, we apply the Inclusion-Exclusion Principle to the subgroups of $H$ as follows. Recall that $H=\iota(\cyclic{n})$ for some odd $n\ge 1$ and some embedding $\iota$ of $\cyclic{n}$ into $G$. We have
    \begin{equation*}
   \mathds{1}_{\cyclicstar{n}} = \sum_{d\mid n} \mu_{\text{m\"obius}}(d){\mathds 1}_{d\cyclic{n}},
    \end{equation*}
    where $\mu_{\text{m\"obius}}$ denotes the M\"obius function, and so
    \begin{equation*}
        \sum_{g\in H^\times} \restricts{\mu}{H^\times}(g) =
        \sum_{g \in \iota(\cyclic{n})} \mathds{1}_{H^\times}(g)\mu(g) = 
        \sum_{d\mid n}
        \mu_{\text{m\"obius}}(d)
        \sum_{g\in \iota(d\cyclic{n})} \mu(g) = 0.
    \end{equation*}
Hence $\Phi$ restricts to a $\Z$-module isomorphism from $V(G)$ onto its image, which is contained in $\oplus_{H\le G \text{ cyclic}} \tilde V(H)$.  It remains to show that this image is in fact all of $\oplus_{H\le G \text{ cyclic}} \tilde V(H)$.  

Let $(\mu^H)_{H\le G \text{ cyclic}} \in \oplus_{H\le G \text{ cyclic}} \tilde V(H)$.  Then there is a unique $\mu \in \Z^G$ such that $\restricts{\mu}{H^\times}=\mu^H$ for all cyclic $H \leq G$.  The support of $\mu$ is automatically finite due to the definition of the direct sum.  The proof will be complete if we show that $\mu \in V(G)$.  Now the only nontrivial fact to check is that $\sum_{g\in H}\mu(g) = 0$ for all subgroups $H$. This follows from the identity, analogous to \cref{eq:g-decomposition}, 
    \begin{equation*}
        \sum_{g\in H} \mu(g) = \sum_{H'\le H\text{ cyclic }} \sum_{g'\in (H')^\times}\mu(g').
    \end{equation*}
\end{proof}

To complement this lemma on torsion groups, our next lemma shows that for a general abelian group $G$ all of the ``action'' of $V(G)$ occurs in the torsion subgroup $G_T$ (the subgroup consisting of all elements with finite order).

\begin{lemma}\label{lem:v-nontorsion-decomposition}
    Let $G$ be an abelian group with no elements of order $2$ and let $G_T$ be its torsion subgroup. 
    Then the map $\mu\mapsto (\restricts{\mu}{G_T}, \restricts{\mu}{G\setminus G_T})$ defines a ($\Z$-module) isomorphism
    \begin{equation*}
        V(G) \xrightarrow{\sim} V(G_T)\oplus
        \left\{\mu:G\setminus G_T \to\Z :\, 
        \begin{aligned}
           &\mu(g)+\mu(-g)=0 \text{ for all $g\in G\setminus G_T$},
           \\
           & \supp(\mu) \text{ is finite}
        \end{aligned}
        \right\}.
    \end{equation*}
\end{lemma}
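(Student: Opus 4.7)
The plan is to check that the assignment $\Phi \colon \mu \mapsto (\restricts{\mu}{G_T}, \restricts{\mu}{G\setminus G_T})$ is a well-defined $\Z$-module homomorphism with trivial kernel and with image exactly the stated direct sum. Injectivity and $\Z$-linearity are immediate from the partition $G = G_T \sqcup (G\setminus G_T)$, so the real content is to (a) verify that $\Phi(\mu)$ satisfies the claimed conditions on each factor, and (b) show that every admissible pair lifts back to some $\mu \in V(G)$.

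For (a), the nontrivial point is that $\mu(g)+\mu(-g)=0$ for every $g \in G\setminus G_T$. I would argue as follows: since $g$ has infinite order, the elements $g, 2g, 4g, \dots$ are pairwise distinct and all lie in $G\setminus G_T$. Iterating the doubling relation from \cref{def:v} gives $\mu(g)+\mu(-g)=\mu(2^k g)+\mu(-2^k g)$ for every $k\ge 0$, and the finite-support hypothesis forces the right-hand side to vanish once $k$ is large enough. The restriction $\restricts{\mu}{G_T}$ lies in $V(G_T)$ essentially by inheritance: $G_T$ is stable under $g\mapsto -g$ and $g\mapsto 2g$, and every subgroup of $G_T$ is a subgroup of $G$, so both the doubling relation and the subgroup-sum relation descend.

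For (b), given $(\mu_1,\mu_2)$ in the target, I would glue them into the unique $\mu:G\to\Z$ with $\restricts{\mu}{G_T}=\mu_1$ and $\restricts{\mu}{G\setminus G_T}=\mu_2$; the support is automatically finite. For the doubling relation at $g\in G$, I split into two cases. If $g\in G_T$, then $-g,\pm 2g\in G_T$ and the relation holds by $\mu_1\in V(G_T)$. If $g\notin G_T$, then $2g$ also has infinite order (otherwise $g$ would be torsion), so all four elements lie in $G\setminus G_T$, and both sides of the relation vanish by the antisymmetry of $\mu_2$. For a subgroup $H\le G$, I would write
\begin{equation*}
    \sum_{g\in H}\mu(g) = \sum_{g\in H\cap G_T}\mu_1(g) + \sum_{g\in H\setminus G_T}\mu_2(g).
\end{equation*}
The first sum vanishes because $H\cap G_T$ is a subgroup of $G_T$ and $\mu_1\in V(G_T)$; the second sum vanishes by pairing each $g$ with $-g\ne g$ (distinct thanks to the no-$2$-torsion hypothesis) inside $H\setminus G_T$ and applying the antisymmetry condition on $\mu_2$.

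The only step that requires any real thought is the vanishing $\mu(g)+\mu(-g)=0$ on $G\setminus G_T$ in part (a), where the finite-support condition in \cref{def:v} is genuinely used via the doubling iteration. Everything else is bookkeeping enabled by the clean splitting $G = G_T\sqcup(G\setminus G_T)$ and the fact that the no-$2$-torsion hypothesis ensures $g\neq -g$ for all nonzero $g$.
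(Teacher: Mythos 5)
Your proof is correct and follows essentially the same route as the paper: the iterated-doubling argument with finite support to get $\mu(g)+\mu(-g)=0$ off the torsion subgroup, the case split $g\in G_T$ versus $g\notin G_T$ for the doubling relation, and the decomposition $\sum_{g\in H}\mu(g)=\sum_{g\in H\cap G_T}\mu(g)+\sum_{g\in H\setminus G_T}\mu(g)$ with pairwise cancellation on the second sum. One tiny remark: in the last step you attribute $g\neq -g$ to the no-$2$-torsion hypothesis, but for $g\in H\setminus G_T$ this is automatic from infinite order (if $g=-g$ then $2g=0$); the hypothesis is not needed at that particular point.
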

\begin{proof}
    Fix $\mu\in V(G)$. For any $g\in G$ and $i \in \N$, we have $\mu(g)+\mu(-g)=\mu(2^ig)+\mu(-2^ig)$.  If $g$ has infinite order, then we must have $\mu(g)+\mu(-g)=0$ since $\mu$ has finite support. In particular, $\restricts{\mu}{G\setminus G_T}$ belongs to the last $\Z$-module appearing in the statement of the lemma.  It follows directly from the definitions of $V(G)$ and $V(G_T)$ that $\restricts{\mu}{G_T}\in V(G_T)$.   Hence the map defined in the statement of the lemma is a homomorphism.  This map is clearly injective, and it remains to show that it is surjective.

    Surjectivity amounts to the statement that if $\mu:G\to\Z$ with finite support satisfies $\restricts{\mu}{G_T}\in V(G_T)$ and $\mu(g)+\mu(-g)=0$ whenever $g$ has infinite order, then $\mu\in V(G)$. The condition $\mu(g)+\mu(-g)=\mu(2g)+\mu(-2g)$ is readily verified by considering separately the cases $g\in G_T$ and $g\in G\setminus G_T$.  And if $H \leq G$ is a subgroup, then
    $$\sum_{g \in H} \mu(g)=\sum_{g \in H \cap G_T}\mu(g)+\sum_{
    g \in H \setminus G_T} \mu(g)=0,$$
where the first sum vanishes by the assumption $\restricts{\mu}{G_T}\in V(G_T)$ (note that $H \cap G_T$ is a subgroup of $G_T$) and the second sum vanishes because the terms $\mu(g), \mu(-g)$ cancel in pairs.  Hence $\mu \in V(G)$.
\end{proof}

Our next task is computing the rank\footnote{Let us recall that the rank of a $\Z$-module (or of an abelian group) is the maximal size of a set of $\Z$-independent elements (see \cite[Chapter I]{lang2002} for a reference on properties of the rank).} of $V(\cyclic{n})$.  In light of \cref{lem:v-decomposition}, it suffices to compute the rank of each $\tilde V(\cyclic{n})$ individually.  Recall the set $U_n$ from \cref{def:u} for $n \geq 3$ odd.  It will also be useful to define the set $U_n^\pm \defeq  U_n \sqcup (-U_n)$ (which is indeed a disjoint union).

\begin{lemma}\label{lem:dimension-tildev}
    We have $\rk(\tilde V(\cyclic{1}))=0$.  For $n\ge 3$ odd, we have
    \begin{equation*}
        \rk\big(\tilde V(\cyclic{n})) = \frac{\varphi(n)}2 + \frac{\varphi(n)}{2|U_n|} - 1,
    \end{equation*}
    where $\varphi$ denotes Euler's totient function.
\end{lemma}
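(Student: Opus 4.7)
The plan is to pass from the $\Z$-module to its rationalization and compute dimensions via a symmetric/antisymmetric decomposition under the involution $g\mapsto -g$.

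The case $n=1$ is immediate: the sum condition forces $\mu(0)=0$, so $\tilde V(\cyclic{1})=0$. For $n\ge 3$, since $\tilde V(\cyclic{n})$ is cut out of $\Z^{\cyclicstar{n}}$ by $\Z$-linear homogeneous equations, its rank equals the $\Q$-dimension of the analogous subspace $\tilde V_\Q\subseteq\Q^{\cyclicstar{n}}$ defined by the same relations, and I would work with the latter.

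The key trick is to split any $\mu\in\tilde V_\Q$ into $\pm1$-eigencomponents under $g\mapsto -g$: write $\mu=\mu_s+\mu_a$ with $\mu_s(-g)=\mu_s(g)$ and $\mu_a(-g)=-\mu_a(g)$. This decomposition is well-defined over $\Q$ because $n$ is odd, so $g\ne -g$ for every $g\in\cyclicstar{n}$. Plugging into the relation $\mu(g)+\mu(-g)=\mu(2g)+\mu(-2g)$ kills the antisymmetric part and yields $\mu_s(g)=\mu_s(2g)$; combined with the built-in invariance under $-1$, this says that $\mu_s$ is constant on the orbits of the subgroup $H\defeq\langle 2,-1\rangle\le\cyclicstar{n}$ acting on $\cyclicstar{n}$ by multiplication. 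The sum condition reduces to $\sum_g\mu_s(g)=0$, since $\sum_g\mu_a(g)$ vanishes automatically.

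I would then verify that $|H|=2|U_n|$ directly from \cref{def:u} by a short case analysis: if $2^k\equiv 1\pmod n$ then $\langle 2\rangle$ has order $|U_n|=k$ and the minimality of $k$ forces $-1\notin\langle 2\rangle$, so $|H|=2k$; if $2^k\equiv -1\pmod n$ then $-1\in\langle 2\rangle$ and $\langle 2\rangle$ has order $2k=2|U_n|$, so $H=\langle 2\rangle$. Hence the number of $H$-orbits on $\cyclicstar{n}$ is $\varphi(n)/(2|U_n|)$, giving the $H$-invariant functions on $\cyclicstar{n}$ that same dimension. The sum condition on $\mu_s$ is a single nontrivial linear constraint (the constant function is $H$-invariant with nonzero sum), so the symmetric part contributes $\varphi(n)/(2|U_n|)-1$. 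The antisymmetric part $\mu_a$ has no remaining relations and is free on one representative of each of the $\varphi(n)/2$ pairs $\{g,-g\}$, contributing $\varphi(n)/2$. Adding the two yields the claimed formula.

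No step is really deep; the decisive observation is the $\pm1$-eigenspace split, which decouples the two defining relations — the $2$-invariance constrains only $\mu_s$, and the sum condition is visible only on $\mu_s$ — reducing the rank computation to the combinatorial identity $|\cyclicstar{n}/H|=\varphi(n)/(2|U_n|)$. The only place that requires a little care is identifying $|H|$ with $2|U_n|$ in both cases of the definition of $U_n$.
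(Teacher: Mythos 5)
Your proof is correct and is essentially the paper's argument in lightly different clothing: the paper defines the symmetrization map $F(\mu)(g) = \mu(g) + \mu(-g)$ and computes $\rk(\tilde V(\cyclic{n})) = \rk(\ker F) + \rk(\im F)$, where $\ker F$ is precisely your antisymmetric eigenspace and $\im F$ encodes your symmetric eigenspace (the paper implicitly uses that $U_n^{\pm} = \langle 2, -1\rangle$, which you make explicit). Both routes give the same two-term count, $\varphi(n)/2$ plus $\varphi(n)/(2|U_n|) - 1$.
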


\begin{proof}
The first statement is trivial, so we will consider only $n\ge 3$.  Define the $\Z$-module homomorphism $F:\tilde V(\cyclic{n})\to\Z^{\cyclicstar{n}}$ via
    \begin{equation*}
        F(\mu)(g) \defeq \mu(g) + \mu(-g).
    \end{equation*}
The definition of $\tilde V(\cyclic{n})$ implies that a function is in the image of $F$ if and only if it assumes a constant value on each coset of $U_n^\pm$ in $\cyclicstar{n}$ and these constants sum to $0$.
Hence $\im F$ has rank $\frac{\varphi(n)}{2|U_n|}-1$ (because there are $\frac{\varphi(n)}{2|U_n|}$ cosets of $U_n^\pm$).  Next, the kernel of $F$ consists of the functions $\mu:\cyclicstar{n}\to\Z$ satisfying $\mu(g)+\mu(-g)=0$ for all $g\in\cyclicstar{n}$.  Thus $\rk(\ker(F))=\frac{\varphi(n)}2$, and putting everything together gives
    \begin{equation*}
        \rk(\tilde V(\cyclic{n}))
        =
        \rk(\ker F) + \rk(\im F) 
        = \frac{\varphi(n)}2 + \frac{\varphi(n)}{2|U_n|}-1.
    \end{equation*}
\end{proof}

\begin{corollary}\label{cor:dimension-v}
    We have $\rk(V(\cyclic{1})) = 0$.  For  $n\ge 3$ odd, we have
    \begin{equation*}
        \rk\big(V(\cyclic{n})\big) = 
        \frac{n-1}2 + \sum_{\substack{d\mid n \\ d\not= 1}}\Big(\frac{\varphi(d)}{2|U_d|}-1\Big),
    \end{equation*}
    where $\varphi$ denotes Euler's totient function.
\end{corollary}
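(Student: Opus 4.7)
The plan is to combine the two preceding results in a direct computation. First I would apply \cref{lem:v-decomposition} to the torsion group $G = \cyclic{n}$: its cyclic subgroups are in bijection with the divisors $d \mid n$ (the unique subgroup of order $d$ being isomorphic to $\cyclic{d}$), so the lemma yields the $\Z$-module isomorphism
\begin{equation*}
    V(\cyclic{n}) \xrightarrow{\sim} \bigoplus_{d \mid n} \tilde V(\cyclic{d}).
\end{equation*}
Since the rank of a $\Z$-module is additive under direct sums, this gives
\begin{equation*}
    \rk(V(\cyclic{n})) = \sum_{d \mid n} \rk\big(\tilde V(\cyclic{d})\big).
\end{equation*}

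Next I would plug in \cref{lem:dimension-tildev}. Since $n$ is odd, every divisor $d$ of $n$ is odd; the $d=1$ summand contributes $0$, and for each $d \geq 3$ dividing $n$ we substitute $\rk(\tilde V(\cyclic{d})) = \tfrac{\varphi(d)}{2} + \tfrac{\varphi(d)}{2|U_d|} - 1$. This produces
\begin{equation*}
    \rk(V(\cyclic{n})) = \sum_{\substack{d \mid n \\ d \neq 1}} \frac{\varphi(d)}{2} + \sum_{\substack{d \mid n \\ d \neq 1}} \left(\frac{\varphi(d)}{2|U_d|} - 1\right).
\end{equation*}

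Finally, I would simplify the first sum using the classical identity $\sum_{d \mid n}\varphi(d) = n$, which gives $\sum_{d \mid n,\, d \neq 1}\varphi(d) = n-1$, hence $\sum_{d \mid n,\, d \neq 1}\tfrac{\varphi(d)}{2} = \tfrac{n-1}{2}$. Substituting this yields exactly the claimed formula. There is no real obstacle here: the case $n=1$ is trivial (the direct sum is empty), and everything for $n \geq 3$ is a routine assembly of \cref{lem:v-decomposition}, \cref{lem:dimension-tildev}, and the totient sum identity.
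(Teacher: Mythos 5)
Your proposal is correct and matches the paper's proof exactly: decompose $V(\cyclic{n})$ via \cref{lem:v-decomposition} using that the cyclic subgroups of $\cyclic{n}$ correspond to the divisors of $n$, substitute the rank formula from \cref{lem:dimension-tildev}, and simplify with $\sum_{d\mid n}\varphi(d)=n$. Nothing to add.
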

\begin{proof}
    The cyclic subgroups of $\cyclic{n}$ are $(n/d)\Z/n\Z \cong \cyclic{d}$ for the divisors $d$ of $n$.
    Thus, by \cref{lem:v-decomposition,lem:dimension-tildev}, we have
    \begin{align*}
        \rk\big(V(\cyclic{n})\big)
        &=
        \sum_{d\mid n} \rk\big(\tilde V(\cyclic{d}\big)
        = 
        \sum_{1<d\mid n} \left(\frac{\varphi(d)}2 + \frac{\varphi(d)}{2|U_d|} - 1 \right)
        \\
        &=
        \frac{n-1}2 + \sum_{1<d\mid n} \left(\frac{\varphi(d)}{2|U_d|} - 1 \right).
    \end{align*}
\end{proof}

\subsection{Functoriality}
The $\Z$-modules $V(G)$ satisfy two nice ``functoriality'' properties that will later allow us to verify membership in $V(G)$ without repeating tedious computations.

\begin{definition}\label{def:pushforward-pullback}
    Let $G_1, G_2$ be abelian groups, and let $\psi:G_1\to G_2$ be a homomorphism.
    \begin{enumerate}
        \item For any function $\mu:G_1\to\Z$ with finite support, the \emph{pushforward} of $\mu$ by $\psi$ is the function $\psi_*\mu:G_2\to\Z$ given by
        \begin{equation*}
            \psi_*\mu(g_2) \defeq \sum_{g_1\in \psi^{-1}(g_2)} \mu(g_1).
        \end{equation*}
        \item For any function $\mu:G_2\to\Z$, the \emph{pullback} of $\mu$ by $\psi$ is the function $\psi^*\mu:G_1\to\Z$ given by $\psi^*\mu \defeq \mu\circ\psi$.
    \end{enumerate}
\end{definition}

\begin{lemma}\label{lem:v-functoriality}
    Let $G_1, G_2$ be abelian groups with no elements of order $2$, and let $\psi:G_1\to G_2$ be a homomorphism.
    \begin{enumerate}
        \item If $\mu\in V(G_1)$, then $\psi_*\mu\in V(G_2)$.
        \item If $\psi$ has finite kernel and $\mu\in V(G_2)$, then $\psi^*\mu\in V(G_1)$.
    \end{enumerate}
\end{lemma}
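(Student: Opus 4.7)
The plan is to verify the three defining conditions of $V(G_i)$ for both the pushforward $\psi_*\mu$ and the pullback $\psi^*\mu$. Part (2) is the easier case. Finiteness of $\supp(\psi^*\mu)$ follows from the finite-kernel hypothesis, since $|\supp(\psi^*\mu)| \le |\ker\psi|\cdot |\supp\mu| < \infty$. The first condition is pointwise and transfers immediately via $\psi$, using $\psi(-g_1) = -\psi(g_1)$ and $\psi(2g_1) = 2\psi(g_1)$. For the subgroup condition at $H_1 \le G_1$, the restriction $\psi|_{H_1}$ has fibers of constant size $|H_1 \cap \ker\psi|$, so
\begin{equation*}
\sum_{g_1 \in H_1} \psi^*\mu(g_1) = |H_1 \cap \ker\psi| \sum_{g_2 \in \psi(H_1)} \mu(g_2) = 0
\end{equation*}
by applying the subgroup condition for $\mu$ to $\psi(H_1) \le G_2$.

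For Part (1), finiteness ($\supp(\psi_*\mu) \subseteq \psi(\supp\mu)$) and the subgroup condition ($\sum_{g_2 \in H_2}(\psi_*\mu)(g_2) = \sum_{g_1 \in \psi^{-1}(H_2)}\mu(g_1) = 0$, since $\psi^{-1}(H_2) \le G_1$) are again straightforward. The main work is the first condition. Introduce the auxiliary function $\tilde E(g)\defeq \mu(g) + \mu(-g)$; by construction $\tilde E(-g) = \tilde E(g)$, and the first condition for $\mu$ says exactly $\tilde E(g) = \tilde E(2g)$. A crucial observation is that $\tilde E$ is supported on the torsion subgroup $G_{1,T}$: if $g$ had infinite order, then the elements $\{2^n g\}_{n \ge 0}$ would be pairwise distinct (since $G_1$ has no $2$-torsion), so $\tilde E(g) = \tilde E(2^n g)$ combined with the finiteness of $\supp \tilde E$ would force $\tilde E(g) = 0$.

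Using $\psi^{-1}(-g_2) = -\psi^{-1}(g_2)$ together with the support property of $\tilde E$, one rewrites
\begin{equation*}
(\psi_*\mu)(g_2) + (\psi_*\mu)(-g_2) = \sum_{g_1 \in \psi^{-1}(g_2) \cap G_{1,T}} \tilde E(g_1),
\end{equation*}
and analogously with $g_2$ replaced by $2g_2$. If $\psi^{-1}(g_2) \cap G_{1,T}$ is empty, then so is $\psi^{-1}(2g_2) \cap G_{1,T}$: a torsion preimage of $2g_2$ would place $2g_2 \in \psi(G_{1,T}) \le G_{2,T}$, and since $\times 2$ is bijective on this torsion $2$-torsion-free group, $g_2$ itself would have a torsion preimage. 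Otherwise, fix $t_0 \in \psi^{-1}(g_2) \cap G_{1,T}$ and let $K_T \defeq (\ker\psi) \cap G_{1,T}$; then $\psi^{-1}(g_2) \cap G_{1,T} = t_0 + K_T$ and $\psi^{-1}(2g_2) \cap G_{1,T} = 2t_0 + K_T$. Since $K_T$ is torsion with no $2$-torsion, $k \mapsto k/2$ is a bijection on $K_T$, and together with $\tilde E(h) = \tilde E(h/2)$ for $h \in G_{1,T}$ (where $h/2$ is the unique element of $G_{1,T}$ with $2(h/2) = h$), this yields
\begin{equation*}
\sum_{k \in K_T} \tilde E(2t_0 + k) = \sum_{k \in K_T} \tilde E(t_0 + k/2) = \sum_{k' \in K_T} \tilde E(t_0 + k'),
\end{equation*}
matching the sum for $g_2$. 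The main obstacle is precisely this first condition for the pushforward: a naive fiberwise comparison fails when $\ker\psi$ has a torsion-free part with $2\ker\psi \subsetneq \ker\psi$, and the support property of $\tilde E$ is the key ingredient that localises the computation to $K_T$, where $\times 2$ is invertible.
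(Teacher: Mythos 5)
Your proof is correct and follows essentially the same approach as the paper's: localize the condition $\mu(g)+\mu(-g)=\mu(2g)+\mu(-2g)$ to the torsion subgroup, where multiplication by $2$ is an automorphism, and observe that $\mu(g)+\mu(-g)=0$ off of torsion because of finite support. The paper cites its earlier decomposition \cref{lem:v-nontorsion-decomposition} for the torsion-support fact that you reprove inline via $\tilde E$; also, a small nit: the pairwise distinctness of $\{2^n g\}_{n\ge 0}$ for $g$ of infinite order follows from infinite order alone, not from the absence of $2$-torsion.
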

\begin{proof}
    Let us prove the statements for the pushforward and pullback separately.
    \begin{enumerate}
        \item It is clear that $\psi_* \mu$ has finite support.  If $H\le G_2$ is a subgroup, then its preimage $\psi^{-1}(H)$ is a subgroup of $G_1$, so $\sum_{g_2 \in H} \psi_*\mu(g_2)=0$. Hence, in order to verify that $\psi_*\mu \in V(G_2)$ it remains only to check that 
        \begin{equation}\label{eq:tmp432}
            \psi_*\mu(g_2)+\psi_*\mu(-g_2)=\psi_*\mu(2g_2) + \psi_*\mu(-2g_2)
        \end{equation} 
        for all $g_2\in G_2$.  Let $(G_1)_T$ denote the torsion subgroup of $G_1$, and consider the decomposition $\mu=\mu|_{(G_1)_T}+\mu|_{G_1 \setminus (G_1)_T}$, as in \cref{lem:v-nontorsion-decomposition}.  The pushforward of $\mu|_{G_1 \setminus (G_1)_T}$ clearly satisfies \eqref{eq:tmp432} since $\mu(g_1)+\mu(-g_1)=0$ for all $g_1 \in G_1 \setminus (G_1)_T$.
        
        To deal with the pushforward of $\mu|_{(G_1)_T}$, consider the restriction of $\psi$ to $(G_1)_T$ and notice that $(\psi|_{(G_1)_T})^{-1}(2g_2) = 2(\psi|_{(G_1)_T})^{-1}(g_2)$. 
        This holds because $g\mapsto 2g$ is an automorphism of $(G_1)_T$ and of $(G_2)_T$ since they are torsion abelian groups without $2$-torsion.
        Now \cref{eq:tmp432} follows from the definition of the pushforward because summands cancel in quadruples
        \begin{align*}
            \psi_*\mu(g_2)+\psi_*\mu(-g_2) &= 
            \sum_{g_1 \in \psi^{-1}(g_2)} \mu(g_1) + \mu(-g_1)
            \\
            &=
            \sum_{g_1 \in \psi^{-1}(g_2)} \mu(2g_1) + \mu(-2g_1)
            \\
            &=
            \sum_{g_1 \in \psi^{-1}(2g_2)} \mu(g_1) + \mu(-g_1)
            =
            \psi_*\mu(2g_2) + \psi_*(-2g_2).
        \end{align*}
        \item The assumption of $\ker\psi$ being finite guarantees that $\psi^*\mu$ has finite support. The other two conditions of membership in $V(G_1)$ follow immediately from the definition of the pullback.
    \end{enumerate}
\end{proof}

\subsection{Relation to moves}
The attentive reader may wonder why the second type of move in \cref{def:moves} uses the set $U_n$ rather than the entire multiplicative subgroup generated by the element $2$.  (These sets differ exactly when $-1$ is a power of $2$ modulo $n$, in which case $\langle 2 \rangle_{\cyclicstar{n}}=U_n \sqcup -U_n$.)  The reason will become clear in the proof of the following proposition, which shows that every element of $V$ can be decomposed as a sum of (indicator functions of) several moves from \cref{def:moves}.  Indeed, one can check that the function $\mu_{U_{17}}-\mu_{3U_{17}}\in V(\cyclic{17})$ would not have this property if we replaced $U_n$ with $\langle 2 \rangle_{\cyclicstar{n}}$ in \cref{def:moves}.

\begin{proposition}\label{prop:v-obtained-by-moves}
    Let $G$ be an abelian group with no elements of order $2$, and let $A, B$ be finite multisets of $G$ such that $\mu_A-\mu_B\in V(G)$.
    Then there is a sequence of multisets $A=E_0, E_1, E_2, \dots, E_m=B$ such that each
    $E_i$ can be transformed into $E_{i+1}$ by applying one of the moves described in \cref{def:moves}.
\end{proposition}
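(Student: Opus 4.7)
The plan is to split the proof into two stages: first an algebraic identification of $V(G)$, and then the construction of an actual sequence of moves.

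\textbf{Stage 1.} I will show that $V(G) = M(G)$, where $M(G) \le \Z^G$ is the subgroup generated by the indicator differences $\mathds{1}_Y - \mathds{1}_X$ coming from the moves in \cref{def:moves}. The inclusion $M(G) \subseteq V(G)$ is a routine verification against \cref{def:v}. For the reverse inclusion, \cref{lem:v-nontorsion-decomposition} and \cref{lem:v-decomposition} reduce the claim to showing $\tilde V(\cyclic{n}) \subseteq M(G)$ for every embedding $\iota : \cyclic{n} \hookrightarrow G$ with $n \ge 3$ odd. I will use the homomorphism $F : \tilde V(\cyclic{n}) \to \Z^{\cyclicstar{n}}$ defined by $F(\mu)(g) = \mu(g) + \mu(-g)$ from the proof of \cref{lem:dimension-tildev}. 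The move-1 differences $\mathds{1}_{-g} - \mathds{1}_g$ lie in (and in fact generate) $\ker F$, while $F(\mathds{1}_{\iota(\beta U_n)} - \mathds{1}_{\iota(\alpha U_n)}) = \mathds{1}_{\iota(\beta U_n^\pm)} - \mathds{1}_{\iota(\alpha U_n^\pm)}$ (using that $U_n^\pm = U_n \sqcup (-U_n)$), and such differences span $\im F$. Thus every $\mu \in \tilde V(\cyclic{n})$ is a $\Z$-combination of move differences, as desired. This is also where the choice of $U_n$, rather than the larger subgroup $\langle 2 \rangle_{\cyclicstar{n}}$, in \cref{def:moves} is forced.

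\textbf{Stage 2.} Next I will promote the algebraic identity to an actual sequence of moves, in two phases. In Phase I, I use move 2's (interleaved with auxiliary move 1's) to transform $A$ into some $A'$ with $F(\mu_{A'}) = F(\mu_B)$. Concretely, on each $U_n^\pm$-coset $C \subseteq \iota(\cyclicstar{n})$ the function $F(\mu)$ is constant with some value $c_C$, and $\sum_C c_C = 0$; I pair up source cosets ($c_C > 0$) with destination cosets ($c_C < 0$) and execute one move 2 per unit of required net transfer, each move transferring a single $U_n$-subcoset from source to destination. Before each such move 2, auxiliary move 1's redistribute mass within the pairs $\{g, -g\} \subseteq C$ so that some $\iota(\alpha U_n) \subseteq C$ sits entirely inside the current multiset. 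In Phase II, the residual $\mu_{A'} - \mu_B$ lies in $\ker F$, so $A'$ and $B$ agree in total multiplicity on each pair $\{g, -g\}$, and a sequence of move-1 flips (always applicable) finishes the reduction to $B$.

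The main obstacle is the redistribution step in Phase I: I need to ensure that some $\iota(\alpha U_n) \subseteq C$ can always be arranged inside the current multiset after suitable move 1's. The key invariant will be that $F(\mu_{A_{\text{current}}})(g) \ge 1$ for every $g$ in each source coset $C$ still needing moves. This holds initially because $F(\mu_A)|_C = F(\mu_B)|_C + c_C \ge c_C \ge 1$, and $F(\mu_{A_{\text{current}}})|_C$ drops by $1$ with each move 2 sourced at $C$, so the invariant persists until the last move on $C$. The invariant means that each of the $|U_n|$ pairs $\{g, -g\} \subseteq C$ has at least one element in the current multiset, which is exactly what makes it possible to use move 1's to flip signs and assemble a full copy of some $\iota(\alpha U_n)$.
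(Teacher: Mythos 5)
Your proof is correct, and the overall strategy (reduce to $\tilde V(\cyclic{n})$ via the decomposition lemmas, then realize each $\tilde V$-element by moves) matches the paper's, but your construction of the actual move sequence is organized quite differently. The paper normalizes $\mu = \mu_A - \mu_B$ by applying move-$1$'s to \emph{both} $A$ and $B$ so that $\mu$ becomes constant on each $g_iU_n$ and vanishes on each $-g_iU_n$, and then inducts on the $\ell^1$-norm of $\mu$, applying one move-$2$ to $A$ at each step (implicitly the move sequence on $B$ is later reversed, using reversibility of the moves). You instead move only $A$, in two phases: Phase I uses move-$2$'s (with interleaved move-$1$'s) to equalize the $F$-values, sustained by the invariant $F(\mu_{A_{\text{current}}})\big|_C \geq 1$ on source cosets still in play, and Phase II uses move-$1$'s to clear the residual, which lies in $\ker F$. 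Both algorithms are valid; the paper's $\ell^1$-induction is slightly slicker, while your two-phase algorithm is more explicit and never touches $B$. One further structural remark: your Stage 1 (the $\Z$-module identity $V(G) = M(G)$) is precisely the paper's \cref{cor:v-generated-by-moves}, which the paper \emph{derives} from \cref{prop:v-obtained-by-moves}; your Stage 2 does not actually invoke Stage 1, so Stage 1 is motivational rather than logically necessary, and you effectively prove the corollary by a different ($F$-homomorphism) argument as a free byproduct.
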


\begin{proof}
Let $\mu\defeq \mu_A-\mu_B$, and decompose $\mu=\mu|_{G_T}+\mu|_{G \setminus G_T}$ as in \cref{lem:v-nontorsion-decomposition}.  For each $g \in G \setminus G_T$ such that $\mu(g)> 0$, apply the move $g \leadsto -g$ to $A$ exactly $\mu(g)$ times; call the resulting set $A'$.  Then $\mu_{A'}-\mu_B=\mu|_{G_T} \in V(G)$, so it suffices to prove the proposition under the assumption that $G$ is a torsion group; we will now suppose that this is the case.

For each $H\le G$ cyclic, let $A_H$ be the multiset of $H^\times$ with multiplicity function $\mu_{A_H}=\mu_A\cdot \mathds 1_{H^{\times}}$; define $B_H$ similarly.\footnote{Morally, $A_H$ is $A\cap H^\times$, but we avoid this notation as it is ambiguous in the context of multisets.}  Thanks to \cref{lem:v-decomposition}, we know that $\mu_{A_H}-\mu_{B_H}\in \tilde V(H)$, and it suffices to prove the lemma for each $A_H, B_H$ individually (since we can concatenate the resulting sequences of moves).
For $H\le G$ with $H\cong \cyclic{n}$ (for some $n\ge 3$ odd), the desired statement for $A_H, B_H$ is equivalent to the statement of the lemma for $G=\cyclic{n}$ under the assumption that $\mu:=\mu_A-\mu_B\in \tilde V(\cyclic{n})$. We will prove the latter statement.

Recall that $U_n^\pm=U_n\sqcup (-U_n)$. Take $g_1, g_2, \dots, g_\ell\in \cyclicstar{n}$ such that
    \begin{equation*}
        \cyclicstar{n} = 
        \bigsqcup_{i=1}^\ell g_i U_n^\pm.
    \end{equation*}
As observed in the proof of \cref{lem:dimension-tildev}, the quantity $\mu(g)+\mu(-g)$ is equal to some constant $t_i$ on each $g_i U_n^\pm$ and we have $\sum_i t_i=0$.  By applying moves $g\leadsto -g$ to $A$ and $B$, we can reduce to the situation where $\mu$ assumes the constant value $t_i$ on each $g_i U_n$ and vanishes on all of the $-g_iU_n$'s.
We now proceed by induction on the $\ell^1$-norm of $\mu$, which equals $|U_n|\sum_i |t_i|$.
If this $\ell^1$-norm is $0$, then  $A=B$ (since $\mu=0$) and there is nothing to prove.  If not, then there are indices $1\le \alpha, \beta\le \ell$ with $t_\alpha>0$ and $t_\beta<0$.  In particular, each element of $g_\alpha U_n$ appears with greater multiplicity in $A$ than in $B$, and each element of $g_\beta U_n$ appears with greater multiplicity in $B$ than in $A$. Obtain a new set $A'$ by applying the move $g_\alpha U_n \leadsto g_\beta U_n$ to $A$.  With $A$ replaced by $A'$, we still have the property that $\mu$ is constant on each $g_i U_n$ and vanishes on all of the $-g_iU_n$'s.  The values of $|t_\alpha|, |t_\beta|$ have been decreased by $1$, and the other $t_i$'s have remained constant, so the $\ell^1$-norm of $\mu$ has decreased and we can apply our induction hypothesis.
\end{proof}

\begin{corollary}\label{cor:v-generated-by-moves}
    Let $G$ be an abelian group with no elements of order $2$.
    Then the $\Z$-module $V(G)$ coincides with the $\Z$-submodule of $\Z^G$ generated by:
    \begin{itemize}
        \item $\mu_{\{g\}}-\mu_{\{-g\}}$, for $g \in G$;
        \item $\mu_{\iota(\alpha U_n)}-\mu_{\iota(\beta U_n)}$, for $n\ge 3$ odd, $\alpha, \beta\in\cyclicstar{n}$, and $\iota$ an embedding of $\cyclic{n}$ into $G$.
    \end{itemize}
\end{corollary}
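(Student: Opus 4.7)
The plan is to establish the two containments separately. Writing $M \leq \Z^G$ for the $\Z$-submodule generated by the two families of functions listed in the statement, the substantial containment is $V(G) \subseteq M$, which will drop out of \cref{prop:v-obtained-by-moves}. The reverse inclusion $M \subseteq V(G)$ is a direct check that each generator satisfies the three defining conditions of $V(G)$.

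For $V(G) \subseteq M$: given $\mu\in V(G)$, decompose it into its positive and negative parts $\mu = \mu^+ - \mu^-$, and let $A, B$ be the finite multisets of $G$ with $\mu_A = \mu^+$ and $\mu_B = \mu^-$; this is possible because $\supp(\mu)$ is finite. Then $\mu_A - \mu_B = \mu \in V(G)$, so \cref{prop:v-obtained-by-moves} furnishes a chain $A = E_0, E_1, \ldots, E_m = B$ in which each step is a move from \cref{def:moves}. Each step changes the multiplicity function by $\pm$ one of the generators of $M$: a step $g \leadsto -g$ contributes $\pm (\mu_{\{g\}} - \mu_{\{-g\}})$ and a step $\iota(\alpha U_n) \leadsto \iota(\beta U_n)$ contributes $\pm (\mu_{\iota(\alpha U_n)} - \mu_{\iota(\beta U_n)})$. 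Telescoping $\mu = \sum_{i=0}^{m-1}(\mu_{E_i} - \mu_{E_{i+1}})$ then expresses $\mu$ as a $\Z$-linear combination of generators.

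For $M \subseteq V(G)$: we need each generator to belong to $V(G)$. The generators $\mu_{\{g\}} - \mu_{\{-g\}}$ satisfy all three conditions trivially, since both the ``$\mu(g)+\mu(-g)$'' expression and the sum over any subgroup vanish identically. For the generators $\mu_{\iota(\alpha U_n)} - \mu_{\iota(\beta U_n)}$, I would reduce to the case $G = \cyclic{n}$ via \cref{lem:v-functoriality}(1): since $\iota$ is a homomorphism, the pushforward $\iota_*$ carries $V(\cyclic{n})$ into $V(G)$, and $\iota_*(\mu_{\alpha U_n}) = \mu_{\iota(\alpha U_n)}$ because $\iota$ is injective. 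It then remains to show that $\mu_{\alpha U_n} - \mu_{\beta U_n} \in V(\cyclic{n})$. The subgroup condition is easy: $\alpha U_n$ and $\beta U_n$ are both contained in $\cyclicstar{n}$, so for any subgroup $S \leq \cyclic{n}$ they either both miss $S$ (when $S$ is proper) or both lie entirely in $S$ (when $S = \cyclic{n}$), in which case they have equal cardinality $|U_n|$. The key observation for the ``$\mu(g)+\mu(-g)=\mu(2g)+\mu(-2g)$'' condition is that multiplication by $2$ preserves the set $U_n^\pm = U_n \sqcup (-U_n)$; this is immediate from the defining property of $U_n$ (the smallest $k$ with $2^k \equiv \pm 1 \pmod n$). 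Hence $2\alpha U_n^\pm = \alpha U_n^\pm$, which yields the required identity $\mathds 1_{g \in \alpha U_n^\pm} = \mathds 1_{2g \in \alpha U_n^\pm}$, and likewise for $\beta$.

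The main obstacle, such as it is, lies entirely in the second inclusion: one must notice that $U_n^\pm$ is stable under doubling, which is precisely the reason $U_n$ (rather than the full cyclic subgroup $\langle 2 \rangle \leq \cyclicstar{n}$) was selected in \cref{def:u} and in the definition of the moves. Aside from this observation, the proof is a straightforward packaging of \cref{prop:v-obtained-by-moves} and \cref{lem:v-functoriality}.
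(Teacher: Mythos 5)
Your proof is correct and follows essentially the same overall strategy as the paper: the containment $V(G)\subseteq M$ is exactly the paper's argument (decompose $\mu$ into positive and negative parts, apply \cref{prop:v-obtained-by-moves}, telescope), and the reverse containment reduces to checking $\mu_{\alpha U_n}-\mu_{\beta U_n}$ in $\cyclic{n}$. The one small divergence is in how you perform that reduction: you push forward along $\iota$ via \cref{lem:v-functoriality}(1) to pass from $V(\cyclic{n})$ to $V(G)$, whereas the paper invokes \cref{lem:v-decomposition,lem:v-nontorsion-decomposition} to reduce to $\tilde V(\cyclic{n})$; your route is if anything a touch lighter, since it uses a single functoriality lemma instead of the two decomposition lemmas, at the small cost of having to treat the subgroup-sum condition over all of $\cyclic{n}$ yourself (which you do correctly by observing that proper subgroups of $\cyclic{n}$ are disjoint from $\cyclicstar{n}$). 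Both variants turn on the same key observation that doubling stabilizes $U_n^\pm$.
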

\begin{proof}
Let $V'(G)$ denote the $\Z$-submodule generated by the functions described in the statement of the lemma.
The inclusion $V(G)\subseteq V'(G)$ is a consequence of \cref{prop:v-obtained-by-moves}. Indeed, given any $\mu\in V(G)$, let $A, B$ be the multisets of $G$ satisfying $\mu_A=\max\{0, \mu\}$ and $\mu_B=-\min\{0, \mu\}$; then $\mu=\mu_A-\mu_B$.  Now \cref{prop:v-obtained-by-moves} guarantees that there exists a sequence $A=E_0, E_1, \dots, E_m=B$ such that each $\mu_{E_{i+1}}-\mu_{E_i}$ coincides with one of the functions that generated $V'(G)$.  Hence
   \begin{equation*}
      \mu = \mu_A-\mu_B = \sum_{i=0}^{m-1} (\mu_{E_{i+1}}-\mu_{E_i}) \in V'(G),
   \end{equation*}
as desired.

For the inclusion $V'(G)\subseteq V(G)$, it suffices to show that the the functions mentioned in the statement belong to $V(G)$. It is immediate that $\mu_{\{g\}}-\mu_{\{-g\}}\in V(G)$, so we focus on showing that $\mu_{\iota(\alpha U_n)}-\mu_{\iota(\beta U_n)}\in V(G)$.  Since $\iota(\alpha U_n), \iota(\beta U_n)\subseteq \iota(\cyclicstar{n})$, \cref{lem:v-decomposition,lem:v-nontorsion-decomposition} tell us that it is enough to show that $\mu_{\alpha U_n}-\mu_{\beta U_n}\in \tilde V(\cyclic{n})$.  Let us verify separately the two conditions for membership in $\tilde V(\cyclic{n})$.  The condition that $\mu(g)+\mu(-g)=\mu(2g)+\mu(-2g)$ for all $g \in \cyclic{n}$ is equivalent to the condition that $\mu(g)+\mu(-g)$ is constant on each coset of $U_n$; this holds for $\mu_{\alpha U_n}-\mu_{\beta U_n}$ since it is the difference of two indicator functions of cosets of $U_n$.  The condition $\sum_{g\in\cyclicstar{n}} \mu(g)=0$ holds since $\sum_{g \in \cyclicstar{n}}(\mu_{\alpha U_n}(g)-\mu_{\beta U_n}(g))=|\alpha U_n| - |\beta U_n| = 0$.
\end{proof}

\section{Finite cyclic groups}\label{sec:finite-cyclic}
The purpose of this section is to prove the implication (i)$\implies$(ii) of \cref{thm:main} in the case where $G$ is a finite cyclic group. 

\begin{proposition}\label{prop:(i)-->(ii)-cyclic}
The implication (i)$\implies$(ii) of \cref{thm:main} holds when $G=\cyclic{n}$ for $n$ an odd natural number.
\end{proposition}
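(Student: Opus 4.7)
My plan is to recast the proposition as a rank equality. Let $K(\cyclic{n}) \subseteq \Z^{\cyclic{n}}$ denote the $\Z$-submodule of all $\mu_A - \mu_B$ where $A, B$ are finite multisets satisfying $\FS(A) = \FS(B) + s$ for some $s \in \cyclic{n}$; the proposition is precisely the inclusion $K(\cyclic{n}) \subseteq V(\cyclic{n})$. The reverse inclusion $V(\cyclic{n}) \subseteq K(\cyclic{n})$ is immediate from \cref{cor:v-generated-by-moves} combined with \cref{lem:fs-of-moves}, which records that each move in \cref{def:moves} changes $\FS$ only by a shift. Moreover, $V(\cyclic{n})$ is saturated in $\Z^{\cyclic{n}}$, since it is the kernel of a $\Z$-linear map into a free $\Z$-module (namely, the map recording the defining equations of \cref{def:v}). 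It will therefore suffice to prove $\rk K(\cyclic{n}) = \rk V(\cyclic{n})$, as then $V(\cyclic{n})\otimes\Q = K(\cyclic{n})\otimes\Q$ intersected with $\Z^{\cyclic{n}}$ forces $K(\cyclic{n}) = V(\cyclic{n})$.

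For the rank of $K(\cyclic{n})$, I would Fourier-transform. Expanding $\FS$ shows that the hypothesis $\FS(A) = \FS(B) + s$ is equivalent to
\[
\prod_{g \in \cyclic{n}} (1 + \chi(g))^{\mu(g)} = \chi(s) \qquad \text{for every character } \chi \in \widehat{\cyclic{n}}.
\]
Viewing $\widehat{\FS}:\Z^{\cyclic{n}} \to \prod_{\chi} \C^\times$ as the multiplicative transform $\mu \mapsto \bigl(\prod_g (1+\chi(g))^{\mu(g)}\bigr)_\chi$, this identity says $\mu \in K(\cyclic{n})$ if and only if $\widehat{\FS}(\mu)$ lies in the shift subgroup $S \defeq \{(\chi(s))_\chi : s \in \cyclic{n}\}$. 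Because $\cyclic{n}$ is finite, $S$ is torsion, so the isomorphism $\Z^{\cyclic{n}}/K(\cyclic{n}) \cong \im \widehat{\FS} / (\im \widehat{\FS} \cap S)$ yields $\rk K(\cyclic{n}) = n - \rk \im \widehat{\FS}$. Invoking \cref{lem:fshat-surjective} of \cite{CipriettiGlaudo2023} as a black box supplies the value of $\rk \im \widehat{\FS}$, which I would then match against the formula in \cref{cor:dimension-v} to finish.

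The main obstacle is the bookkeeping required to align the ``full rank'' of $\im \widehat{\FS}$ from \cref{lem:fshat-surjective} with the explicit expression for $\rk V(\cyclic{n})$ in \cref{cor:dimension-v}. Morally, $V(\cyclic{n})$ is contained in the kernel of $\widehat{\FS}$ modulo shifts (exactly the reverse inclusion established above), and ``full rank'' of the image should say that $V(\cyclic{n})$ exhausts that kernel. Making this matching rigorous -- and ensuring that the shift contribution is accounted for correctly despite its being torsion -- is the most delicate step, and the place where the cyclotomic-unit/rank input from \cite{CipriettiGlaudo2023} does the real work.
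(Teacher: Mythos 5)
Your framing is essentially the paper's, reorganized cleanly: instead of working with $\ker\widehat\FS$ and ad hoc multiplications by $n$, you take $K(\cyclic{n})\defeq\widehat\FS^{-1}(S)$ directly (where $S$ is the group of shifts), observe $V(\cyclic{n})\subseteq K(\cyclic{n})$ from \cref{cor:v-generated-by-moves} and \cref{lem:fs-of-moves}, note $\rk K(\cyclic{n})=n-\rk\im\widehat\FS$ since $S$ is torsion, and explicitly invoke saturation of $V(\cyclic{n})$ (which the paper uses implicitly in its final sentence). All of this is correct, and it is the same strategy the paper carries out.

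The gap is in the rank matching. \cref{lem:fshat-surjective} does \emph{not} supply the value of $\rk\im\widehat\FS$; it only says that $\im\widehat\FS$ has finite index in $\bigoplus_{d\mid n}K_d$, which reduces the problem to computing each $\rk K_d$. That computation is \cref{lem:rk-computation}, which is new to the present paper and not a black box from \cite{CipriettiGlaudo2023}: it uses a cyclotomic-unit argument (the units $\xi_j=\prod_{1\ne e\mid d}(1-\omega_e^j)/(1-\omega_e)$, the fact that they generate a finite-index subgroup of the units of $\Q(\omega_d)$, and Dirichlet's unit theorem) to show $\rk K_d\ge\frac{\varphi(d)}2-\bigl(\frac{\varphi(d)}{2|U_d|}-1\bigr)$ for odd $d\ge 3$. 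Your attribution of ``the cyclotomic-unit/rank input'' to \cite{CipriettiGlaudo2023} suggests you are conflating \cref{lem:rk-computation} with \cref{lem:fshat-surjective}. Once \cref{lem:rk-computation} is in hand, the bookkeeping you anticipate closes immediately: $\rk K(\cyclic{n})=n-\sum_{d\mid n}\rk K_d\le\rk V(\cyclic{n})$ by \cref{cor:dimension-v}, the inclusion $V\subseteq K$ gives the reverse inequality, so $\rk K=\rk V$, and your saturation step then forces $K(\cyclic{n})=V(\cyclic{n})$. Without the lower bound on $\rk K_d$, you have only $V\subseteq K$ and no upper bound on $\rk K(\cyclic{n})$, so the sandwich does not close.
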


\subsection{Fourier-analytic reduction}
We begin by interpreting the map $\FS$ in terms of the Fourier transform.  In particular, we will see that $\FS$ ``acts like'' a linear map when viewed on the Fourier side.  The same perspective essentially appeared in \cite{CipriettiGlaudo2023}, just phrased in different language.  Let $G$ be a finite abelian group.  A \emph{character} of $G$ is a group homomorphism $\chi: G \to \C^\times$, and we let $\widehat{G}$ denote the group of all such characters.  If $f: G \to \C$ is a function, then its \emph{Fourier transform} $\hat f:\widehat G \to \C$ is the function
$$\hat f(\chi)\defeq \sum_{g \in G} f(g) \chi(-g).$$
See \cite[Part I]{Terras1999} for an overview of the Fourier transform for finite abelian groups.

Let $A$ be a finite multiset of $G$, and write $A=\{a_1, \ldots, a_q\}$ (with multiplicities).  
We have the identity
$$\mu_{\FS(A)}=(\mu_{\{0\}}+\mu_{\{a_1\}})*\cdots*(\mu_{\{0\}}+\mu_{\{a_q\}}),$$ 
where $*$ indicates the \emph{convolution} $(f_1*f_2)(g)\defeq \sum_{h \in G} f_1(h) f_2(g-h)$.  Notice that the term $\mu_{\{0\}}+\mu_{\{g\}}$ appears exactly $\mu_A(g)$ times on the right-hand side.  Taking Fourier transforms of both sides, we get
$$\hat \mu_{\FS(A)}(\chi)=\prod_{g \in G}(1+\chi(g))^{\mu_A(g)}.$$
For $A,B$ finite multisets of $G$ and $s \in G$, we have $\FS(A)=\FS(B)+s$ if and only if
\begin{equation*}
    \prod_{g\in G}(1+\chi(g))^{\mu_A(g)} = \hat \mu_{\FS(A)}(\chi)=\hat \mu_{\FS(B)}(\chi) \chi(s)
    =
    \chi(s)\prod_{g\in G}(1+\chi(g))^{\mu_B(g)}
\end{equation*}
for all characters $\chi\in \widehat G$. Since $G$ has no elements of order $2$, the expressions $(1+\chi(g))$ cannot vanish, so the previous equality is equivalent to
\begin{equation*}
    \prod_{g\in G}(1+\chi(g))^{\mu(g)}
    =
    \chi(s)
\end{equation*}
for all characters $\chi\in\widehat G$, where we have defined $\mu\defeq \mu_A-\mu_B$.

Now consider the case where $G=\cyclic{n}$ for some odd natural number $n$. 
Write $\omega_n\defeq  e^{2\pi i/n}$.  The characters of $\cyclic{n}$ are precisely the functions $\chi_j:g \mapsto \omega_n^{jg}$ for $j \in \cyclic{n}$.  For a finite multiset $S$ of $\cyclic{n}$, the following three pieces of data are equivalent: the multiset $S$; the function $\hat \mu_{S}$; and the collection of complex numbers $\hat\mu_{S}(\chi_d)$ for $d \mid n$. The equivalence of the first two amounts to the invertibility of the Fourier transform. The third piece of information is a restriction of the Fourier transform to a special set of characters. One can recover the value of the Fourier transform at an arbitrary character $\chi_j$ as follows. Let $d\defeq n/\gcd(j,n)$.  Then there is some $\sigma$ in the Galois group of $\Q[\omega_{d}]/\Q$ that sends $\omega_d=\omega_n^{n/d}$ to $\omega_n^j$. Since $\mu_S$ is integer-valued, we have $\hat\mu_S(\chi_j)=\sigma(\hat\mu_S(\chi_d))$ and thus we can recover all values of $\hat\mu_S$ from the values at the characters $\chi_d$ for $d\mid n$.

Thanks to what we observed in the previous two paragraphs, for $A,B$ finite multisets of $\cyclic{n}$ and $s \in \cyclic{n}$, we have $\FS(A)=\FS(B)+s$ if and only if
\begin{equation*}
    \prod_{j\in\cyclic{n}}(1+\omega_d^j)^{\mu(j)}
    =
    \omega_d^s
\end{equation*}
for all $d\mid n$.  With this characterization in mind, we make the following central definitions. 

\begin{definition}[{\cite[Definition 4.1]{CipriettiGlaudo2023}}]
    For $n\ge 1$ an odd natural number, let $K_n$ be the (multiplicative) subgroup of $\C^\times$ generated by $\{1 + \omega_n^j : 0 \le j < n\}$.
    Notice that we include $1 + \omega_n^0 = 2$ among the generators.
\end{definition}

\begin{definition}
Let $n\ge 1$ be an odd natural number, and define the group homomorphism $\widehat\FS:\Z^{\cyclic{n}} \cong \Z^n\to \oplus_{d\mid n} K_d$ by
    \begin{equation*}
        \widehat\FS(x)=\widehat\FS(x_0, x_1,\, \ldots, x_{n-1}) \defeq  \bigg(\prod_{j=0}^{n-1}(1+\omega_d^j)^{x_j}\bigg)_{d\mid n}.
    \end{equation*}
\end{definition}

The above discussion is summarized in the following lemma.

\begin{lemma}\label{lem:fourier}
    Let $n$ be an odd natural number.  Let $A, B$ be finite multisets of $\cyclic{n}$, and let $s \in \cyclic{n}$.  Then $\FS(A)=\FS(B)+s$ if and only if 
    \begin{equation*}
        \widehat\FS(\mu) = (\omega_d^{s})_{d\mid n},
    \end{equation*}
    where $\mu:=\mu_A-\mu_B$; notice that when this occurs, $n \mu$ lies in the kernel of $\widehat\FS$.
\end{lemma}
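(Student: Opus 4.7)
The proof just has to consolidate the Fourier-analytic derivation already worked out in the paragraphs preceding the statement; the lemma is essentially a restatement. My plan would therefore be to package that derivation cleanly and then append the observation about the kernel.

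The first step is to use the convolution identity for $\mu_{\FS(A)}$ to translate $\FS(A)=\FS(B)+s$ into a multiplicative identity of Fourier coefficients. Fourier injectivity on $\Z^{\cyclic{n}}$ gives an equivalent ``for every $\chi \in \widehat{\cyclic{n}}$'' statement, and the nonvanishing of $1+\chi(g)$, valid because $n$ is odd and hence $\chi(g)$ is an odd-order root of unity, lets me divide by the $B$-side to obtain the cleaner condition $\prod_g (1+\chi(g))^{\mu(g)} = \chi(s)$ for all $\chi$.

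The second step is to reduce ``for every $\chi$'' to ``for every $\chi_d$ with $d \mid n$''. An arbitrary character $\chi_j$ corresponds to $d = n/\gcd(j,n)$, and there is $\sigma \in \mathrm{Gal}(\Q[\omega_d]/\Q)$ sending $\omega_d$ to $\omega_n^j$. Because $\mu$ is integer-valued, $\sigma$ fixes the exponents $\mu(g)$, so applying $\sigma$ to the $\chi_d$-identity produces the $\chi_j$-identity. This converts the condition into $\widehat\FS(\mu) = (\omega_d^s)_{d \mid n}$ by the very definition of $\widehat\FS$.

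Finally, for the parenthetical kernel remark, I would use that $\widehat\FS$ is a homomorphism from $(\Z^n,+)$ to the multiplicative group $\bigoplus_{d \mid n}K_d$, so
$$\widehat\FS(n\mu) = \widehat\FS(\mu)^n = (\omega_d^{ns})_{d\mid n} = (1)_{d\mid n},$$
the last equality because $d \mid n$ forces $\omega_d^n = 1$; hence $n\mu \in \ker\widehat\FS$. There is no real obstacle here: everything reduces to careful bookkeeping of what was already established in the exposition, with the Galois reduction being the only step that merits a word of explanation.
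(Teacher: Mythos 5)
Your proposal packages precisely the Fourier-analytic discussion that the paper itself uses (the paper gives no separate proof block for this lemma, instead noting "the above discussion is summarized in the following lemma"): the convolution identity, the nonvanishing of $1+\chi(g)$ when $G$ is odd-torsion, the Galois reduction to characters $\chi_d$ for $d\mid n$, and the observation that $\widehat\FS(n\mu)=\widehat\FS(\mu)^n=(\omega_d^{ns})_{d\mid n}=(1)_{d\mid n}$. This is correct and takes essentially the same approach as the paper.
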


\subsection{Rank calculations}
In light of \cref{lem:fourier}, we wish to study the kernel of $\widehat{\FS}$.  Focusing on the kernel (rather than the preimages of all of the points $(\omega_d^{s})_{d\mid n}$) is advantageous because we will be able to apply rank-nullity dimension-counting arguments.  
The crux of our argument is the following important lemma from \cite{CipriettiGlaudo2023}, which tells us that the image of $\widehat\FS$ has full rank in its codomain.
\begin{lemma}[{\cite[Lemma 4.4]{CipriettiGlaudo2023}}] \label{lem:fshat-surjective}
    Let $n$ be an odd natural number. Then the image of the map $\widehat\FS:\Z^n\to \oplus_{d\mid n} K_d$ is a finite-index subgroup of $\oplus_{d\mid n} K_d$.
\end{lemma}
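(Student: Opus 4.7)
My plan is to reduce the claim to exhibiting enough multiplicatively-independent image elements, and to carry out this exhibition using a combination of explicit constructions and a Galois-theoretic independence argument. Let $r \defeq \rk(\oplus_{d\mid n} K_d) = \sum_{d\mid n} \rk(K_d)$; my goal is to produce $r$ vectors in $\Z^n$ whose $\widehat\FS$-images are multiplicatively independent in $\oplus_{d\mid n} K_d$.

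First, I would observe that for each $d \mid n$, the $d$-component of $\widehat\FS$ factors through the surjection $\pi_d : \Z^n \to \Z^d$ defined by $\pi_d(x)_i \defeq \sum_{j \equiv i \pmod{d}} x_j$, and the induced map $\Z^d \to K_d$ is surjective by the very definition of $K_d$. So the image of $\widehat\FS$ projects onto each $K_d$ individually, giving at least the rank of any single factor; the real task is to combine these projections without collisions. The natural construction mimics a Chinese-Remainder-like separation: for a fixed level $d_0 \mid n$, if $f(T) \in \Z[T]$ is divisible by $\prod_{d \mid n,\, d \neq d_0} \Phi_d(T)$, then a vector built from the coefficients of $f$ has $\widehat\FS$-image concentrated at level $d_0$, up to controlled corrections at the smaller levels. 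Assembling such vectors for each $d_0$ gives candidate independent generators, and by induction on divisors one can upgrade them to a genuine $\Z$-basis modulo torsion.

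The main obstacle is showing joint multiplicative independence, i.e.\ that contributions from different levels cannot secretly cancel. The natural attack is via the logarithmic embedding $\alpha \mapsto (\log|\sigma(\alpha)|)_\sigma$ indexed by embeddings $\sigma : \Q(\omega_n) \hookrightarrow \C$, which turns multiplicative independence into $\R$-linear independence. Under this embedding, each $K_d$ lands in a subspace invariant under $\operatorname{Gal}(\Q(\omega_n)/\Q(\omega_d))$. Any hypothetical relation $\prod_{d} y_d^{k_d} = 1$ with $y_d \in K_d$ can then be averaged over these Galois groups for successive divisors $d \mid n$, peeling off the level-by-level contributions and eventually reducing to the base case $d = 1$, where $K_1 = \langle 2 \rangle \cong \Z$ contributes exactly one rank.

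Finally, matching total ranks requires an independent computation of $\rk(K_d)$. The key facts are that for $d > 1$ odd, the element $2 = 1 + \omega_d^0 \in K_d$ is \emph{not} a unit in $\Z[\omega_d]$ (its norm being $2^{\varphi(d)}$), while each $1 + \omega_d^j$ with $\gcd(j, d) = 1$ \emph{is} a unit, via the identity $1 + \omega_d^j = (1 - \omega_d^{2j})/(1 - \omega_d^j)$ together with $\gcd(2,d) = 1$. Combined with Dirichlet's unit theorem for $\Z[\omega_d]^\times$ (which gives rank $\varphi(d)/2 - 1$) and a recursion over $d' \mid d$ that accounts for the redundant generators $1 + \omega_d^j$ with $\gcd(j,d) > 1$ lying in the smaller subfields $\Q(\omega_{d'})$, one can pin down $\rk(K_d)$ and verify that it matches the rank of the image constructed above.
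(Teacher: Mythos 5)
This paper does not actually prove the statement: it imports \cite[Lemma 4.4]{CipriettiGlaudo2023} as a black box (as the authors note explicitly after \cref{thm:main}), so there is no in-paper proof to compare against, and your proposal must stand on its own. It has a serious gap in the ``Chinese-Remainder-like separation'' step. The map $\widehat\FS$ is \emph{multiplicative} in the exponent vector $x$: its $d$-component is $\prod_j (1+\omega_d^j)^{x_j}$, which is \emph{not} the evaluation of the polynomial $\sum_j x_j T^j$ at $T=\omega_d$. Consequently, taking $x$ to be the coefficient vector of a polynomial divisible by $\Phi_d(T)$ does \emph{not} kill the level-$d$ component. Concretely, for $n=3$ and $x=(1,1,1)$ (the coefficients of $\Phi_3$), one computes $(1+\omega_3^0)(1+\omega_3)(1+\omega_3^2)=2\neq 1$, even though $\Phi_3(\omega_3)=0$. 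The condition $f(\omega_d)=0$ is a $\Z$-linear constraint on the coefficients, whereas $\prod_j(1+\omega_d^j)^{x_j}=1$ is a multiplicative constraint involving logarithms of the $1+\omega_d^j$; these cut out quite different subgroups of $\Z^n$ (they coincide only for $d=1$), so the construction does not produce elements concentrated at a single level and the subsequent independence argument never gets started.

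The plan to ``independently compute $\rk(K_d)$'' is also unjustified. \cref{lem:rk-computation} in this paper establishes only a \emph{lower} bound on $\rk(K_n)$, and the authors explicitly note that the exact value is obtained only as a consequence of the present lemma. Your sketched ``recursion over divisors'' glosses over precisely the question of which multiplicative relations hold among the generators $1+\omega_d^j$ across levels---but that is essentially what the lemma asserts, so the argument flirts with circularity. A viable strategy would need to supply a matching \emph{upper} bound on each $\rk(K_d)$ together with a lower bound on the image rank, or else argue directly---for instance via a regulator-style nonvanishing computation in the logarithmic embedding, which your third paragraph correctly identifies as the relevant tool---that the image has rank equal to the codomain rank, without pre-computing $\rk(K_d)$ at all.
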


We prove a lower bound for the rank of the codomain of $\widehat\FS$.

\begin{lemma}\label{lem:rk-computation}
    We have $\rk(K_1)=1$.
    For $n\ge 3$ odd, we have
    \begin{equation*}
        \rk(K_n) \ge \frac{\varphi(n)}2 - \Big(\frac{\varphi(n)}{2|U_n|}-1\Big).
    \end{equation*}
\end{lemma}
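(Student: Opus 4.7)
The case $n=1$ is immediate since $K_1=\langle 2\rangle\cong\Z$. For $n\ge 3$ odd, my plan is to bound $\rk(K_n)$ from below via the logarithmic embedding $L\colon K_n\to \R^{\cyclicstar{n}}$ defined by $L(x)(a)\defeq \log|\sigma_a(x)|$, where $\sigma_a\in\mathrm{Gal}(\Q(\omega_n)/\Q)$ sends $\omega_n$ to $\omega_n^a$. Since $L$ is a $\Z$-module homomorphism, $\rk(K_n)\ge \rk_\Z(L(K_n))$; the image is $\Z$-generated by $v_0\defeq L(2)=(\log 2)\cdot\mathds{1}$ together with $v_j\defeq L(1+\omega_n^j)$ for $1\le j<n$, where $v_j(a)=\log|1+\omega_n^{aj}|$.

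I will compute $\rk_\Z(L(K_n))$ by an isotypic decomposition with respect to the $\cyclicstar{n}$-action permuting coordinates. The set of generators is stable under this action (since $a\cdot v_j = v_{a^{-1}j}$), so $\rk_\Z(L(K_n))$ equals the number of characters $\chi$ of $\cyclicstar{n}$ for which some Fourier coefficient $\widehat v(\chi)\defeq \sum_a \chi(a)v(a)$ among the generators is nonzero. The trivial character is captured by $v_0$ since $\widehat{v_0}(1)=\varphi(n)\log 2\ne 0$. For an even nontrivial character $\chi$ of $\cyclicstar{n}$ of conductor $f\mid n$, write $\chi=\chi^*\circ\pi$ with $\pi\colon\cyclicstar{n}\twoheadrightarrow\cyclicstar{f}$ the reduction map and $\chi^*$ primitive. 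Choosing $j=n/f$, one has $v_j(a)=\log|1+\omega_f^a|$ depending only on $a\bmod f$; combining surjectivity of $\pi$ (with equal fibers of size $\varphi(n)/\varphi(f)$) with the identity $1+\omega_f^c=(1-\omega_f^{2c})/(1-\omega_f^c)$ and the substitution $b=2c$ (a bijection on $\cyclicstar{f}$ since $f$ is odd) gives
\begin{equation*}
\widehat{v_j}(\chi) = \frac{\varphi(n)}{\varphi(f)}\bigl(\chi^*(2)^{-1}-1\bigr)\sum_{c\in\cyclicstar{f}}\chi^*(c)\log|1-\omega_f^c|.
\end{equation*}
For $\chi^*$ primitive, nontrivial, and even the last sum is, up to a nonzero Gauss-sum factor, the quantity $-f\cdot L(1,\bar\chi^*)$, nonzero by the classical nonvanishing of Dirichlet $L$-values at $s=1$. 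Hence $\widehat{v_j}(\chi)\ne 0$ precisely when $\chi(-1)=1$ and $\chi(2)\ne 1$.

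It remains to count. Since $\langle -1,2\rangle\le\cyclicstar{n}$ has order $2|U_n|$, there are $\varphi(n)/2$ characters of $\cyclicstar{n}$ with $\chi(-1)=1$ and $\varphi(n)/(2|U_n|)$ of them with $\chi(-1)=\chi(2)=1$; their difference $\varphi(n)/2-\varphi(n)/(2|U_n|)$ counts the characters contributing via some $v_j$ with $j>0$. Adding one for the trivial character (captured by $v_0$ and not in the preceding set, since the trivial character has $\chi(2)=1$) yields $\rk(K_n)\ge \varphi(n)/2-\varphi(n)/(2|U_n|)+1$, matching the statement. The main technical subtlety is the choice of $v_j$ matched to the conductor of $\chi$ rather than using $v_1$ alone: for imprimitive $\chi$ the naive coefficient $\widehat{v_1}(\chi)$ acquires a factor of the shape $\prod_{p\mid n,\,p\nmid f}(1-\chi^*(p))$, which can vanish; matching $j$ to the conductor isolates the primitive $L$-value and guarantees nonvanishing.
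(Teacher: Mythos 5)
Your argument is correct but follows a genuinely different route from the paper. The paper's proof splits $K_n\cong\langle 2\rangle\oplus\tilde K_n$, relates $\tilde K_n$ to the cyclotomic units $\xi_j=\prod_{1\neq d\mid n}(1-\omega_d^j)/(1-\omega_d)$ by showing $\xi_{2j}/\xi_j,\xi_{-j}/\xi_j\in\tilde K_n$, and then invokes two black boxes from the literature: Washington's theorem that the cyclotomic units have finite index in the full unit group of $\Q(\omega_n)$, and Dirichlet's unit theorem giving that group rank $\varphi(n)/2-1$. You instead pass directly to the logarithmic image $L(K_n)\subseteq\R^{\cyclicstar{n}}$ and compute its $\Z$-rank by isotypic decomposition for the $\cyclicstar{n}$-action, reducing the problem to deciding for which characters $\chi$ some Fourier coefficient $\widehat{v_j}(\chi)$ of a generator is nonzero. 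You evaluate this coefficient, for $j=n/f$ matched to the conductor of $\chi$, via the identity $1+\omega_f^c=(1-\omega_f^{2c})/(1-\omega_f^c)$ and the classical expression of $\sum_{c}\chi^*(c)\log|1-\omega_f^c|$ in terms of $L(1,\bar\chi^*)$, whose nonvanishing you then cite. Both arguments ultimately rest on the nonvanishing of Dirichlet $L$-functions at $s=1$ (which underlies the finite-index theorem as well), but yours is essentially self-contained: it dispenses with the index theorem and Dirichlet's unit theorem as black boxes, at the cost of carrying out the regulator/Fourier computation explicitly, including the conductor-matched choice $j=n/f$ that circumvents the vanishing Euler factors $\prod_{p\mid n,\,p\nmid f}(1-\chi^*(p))$ arising for imprimitive $\chi$, a subtlety you correctly flag. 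Your computation is in effect the standard regulator calculation behind the cyclotomic unit index formula, so the two proofs are doing the same analytic work from different vantage points; the paper's version is shorter by deferring to the cited theorems, while yours makes the mechanism transparent.
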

The later results in this section imply that in fact equality holds, but at this point we can prove only the inequality.

\begin{proof}
If $n=1$, then $K_n=\langle 2\rangle_{\C^\times}\cong \Z$ has rank $1$.
    
Now, suppose $n\ge 3$. We have $(1+\omega_n)^{-1} = 1 + \omega_n^2 + \omega_n^4 + \cdots + \omega_n^{n-1}$, so $1+\omega_n$ is a unit of (the ring of integers of) $\Q(\omega_n)$. Analogously, $1+\omega_n^j$ is a unit for each $0<j<n$. By contrast, $1+\omega_n^0=2$ is not a unit. 
Since $2$ is the only non-unit among the generators of $K_n$, we have $K_n\cong \langle 2\rangle_{\C^\times} \oplus \tilde K_n$, where $\tilde K_n$ is the subgroup generated by $\{ 1+\omega_n^j:\, 1\le j < n\}$.  For each $j\in(\cyclic{n})^\times$, define the number 
    \begin{equation*}
        \xi_j \defeq  \prod_{\substack{d\mid n\\d\not=1}} \frac{1-\omega_d^j}{1-\omega_d} .
    \end{equation*}
Notice that there exists $j' \in \cyclicstar{n}$ such that $jj' \equiv 1 \pmod{n}$, and then
$$\xi_j^{-1}=\prod_{\substack{d\mid n\\d\not=1}} \frac{1-\omega_d}{1-\omega_d^j}=\prod_{\substack{d\mid n\\d\not=1}}\frac{1-\omega_d^{j j'}}{1-\omega_d^{j}}=\prod_{\substack{d\mid n\\d\not=1}}(1+\omega_d^j+\cdots+\omega_d^{j(j'-1)})$$
is an algebraic integer in $\Q(\omega_n)$, so $\xi_j$ is a unit. Also, the identities
    \begin{align*}
        \xi_{-j} = \xi_j\cdot\prod_{\substack{d\mid n\\d\not=1}} \omega_d^{-j} 
        = \xi_j\prod_{\substack{d\mid n\\d\not=1}} \frac{1+\omega_d^{-j}}{1+\omega_d^j} \quad \text{and} \quad 
        \xi_{2j}= \xi_j\cdot\prod_{\substack{d\mid n\\d\not=1}} (1+\omega_d^j)
    \end{align*}
show that $\xi_{2j}/\xi_j, \xi_{-j}/\xi_j\in \tilde K_n$ for all $j \in \cyclicstar{n}$.

Recall that $U_n^\pm=U_n\sqcup (-U_n)$. Let $\ell =  \frac{\varphi(n)}{2|U_n|}$, and take $g_1, g_2,\dots, g_{\ell}\in(\cyclic{n})^\times$, with $g_\ell=1,$ such that
    \begin{equation*}
        (\cyclic{n})^{\times} = \bigsqcup_{i=1}^{\ell} g_iU_n^\pm.
    \end{equation*}
The observations from the previous paragraph show that $\langle \tilde K_n, \xi_{g_1}, \xi_{g_2}, \dots, \xi_{g_{\ell-1}}\rangle_{\C^\times}$ contains $\{\xi_j:\ j\in(\cyclic{n})^\times\}$.  Thanks to \cite[Theorem 8.3 and Theorem 4.12]{washington97}, we know that $\{\xi_j:\ j\in(\cyclic{n})^\times\}$ generates a finite-index subgroup of the units of $\Q(\omega_n)$, and we conclude that $\langle \tilde K_n, \xi_{g_1}, \xi_{g_2}, \dots, \xi_{g_{\ell-1}}\rangle_{\C^\times}$ is also a finite-index subgroup.  The rank of the group of units is $\varphi(n)/2-1$ (as a consequence of Dirichlet's Unit Theorem \cite[Theorem 38]{marcus77}), so $\rk(\tilde K_n)\ge \varphi(n)/2-1 - (\ell-1)$. Thus
    \begin{equation*}
        \rk(K_n)=1+\rk(\tilde K_n)\ge 1+\frac{\varphi(n)}2-\ell=\frac{\varphi(n)}2 - \Big(\frac{\varphi(n)}{2|U_n|}-1\Big),
    \end{equation*}
as desired.
\end{proof}

\subsection{Proof of (i)\texorpdfstring{$\implies$}{ implies }(ii) for finite cyclic groups}
The last piece of the puzzle is checking how the moves of \cref{def:moves} interact with $\FS$.

\begin{lemma}\label{lem:fs-of-moves}
    Let $G$ be an abelian group with no elements of order $2$.
    \begin{enumerate}
        \item For any $g\in G$, we have
        \begin{equation*}
            \FS(\{g\}) = \FS(\{-g\}) + g.
        \end{equation*}
        \item For any odd $n\ge 3$, any $\alpha,\beta\in \cyclicstar{n}$, and any embedding $\iota:\cyclic{n}\to G$, we have
        \begin{equation*}
            \FS\big(\iota(\alpha U_n)\big) = \FS\big(\iota(\beta U_n)\big) + s
        \end{equation*}
        for some $s\in G$.
    \end{enumerate}
In particular, if $A$ is a finite multiset of $G$ and $A'$ is obtained from $A$ by applying one of the moves described in \cref{def:moves}, then $\FS(A')=\FS(A)+s$ for some $s \in G$.
\end{lemma}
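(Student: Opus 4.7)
Part (1) is a one-line calculation: $\FS(\{g\}) = \{0, g\}$ and $\FS(\{-g\}) + g = \{0, -g\} + g = \{g, 0\}$, so the two multisets coincide. For Part (2), the key reduction is to the case $G = \cyclic{n}$ with $\iota$ the identity. This works because any embedding $\iota:\cyclic{n} \to G$ satisfies $\FS(\iota(S)) = \iota(\FS(S))$ and $\iota(X + s) = \iota(X) + \iota(s)$, so it suffices to verify $\FS(\alpha U_n) = \FS(\beta U_n) + s$ as multisets in $\cyclic{n}$ and then apply $\iota$ to carry the identity to $G$.

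Within $\cyclic{n}$, I would use the fact that $\FS(\alpha U_n) = \alpha \cdot \FS(U_n)$ (since multiplication by $\alpha$ commutes with forming subset sums), reducing to understanding $\FS(U_n)$ itself and how multiplication by a unit affects it. The subset sums of $U_n = \{1, 2, \ldots, 2^{k-1}\}$, viewed in $\Z$, are exactly the integers $0, 1, \ldots, 2^k - 1$, each with multiplicity one, so I reduce these modulo $n$ and split according to the dichotomy in \cref{def:u}. In Case A ($2^k \equiv 1 \pmod{n}$, so $2^k - 1 = mn$), the multiset $\FS(U_n)$ consists of $m$ copies of each element of $\cyclic{n}$ together with one extra copy of $0$, which is manifestly invariant under multiplication by any unit; hence $\FS(\alpha U_n) = \FS(\beta U_n)$ and $s = 0$ suffices. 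In Case B ($2^k \equiv -1 \pmod{n}$, so $2^k + 1 = mn$), a direct count shows $\FS(U_n)$ consists of $m$ copies of each element of $\cyclic{n}$ with one copy of $-1$ removed. Therefore $\alpha \cdot \FS(U_n)$ has $m$ copies of every element with one copy of $-\alpha$ removed, and shifting by $s = \beta - \alpha$ moves the location of the missing copy from $-\beta$ to $-\alpha$, yielding the desired identity.

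For the \emph{in particular} statement, I would use the multiset convolution identity $\mu_{\FS(A \cup T)} = \mu_{\FS(A \setminus T)} * \mu_{\FS(T)}$ (valid whenever $T$ is a sub-multiset of $A$). If $A'$ is obtained from $A$ by replacing a sub-multiset $T$ with $T'$ satisfying $\FS(T') = \FS(T) + t$ (which is exactly the content of Parts (1) and (2)), then convolving both sides with $\mu_{\FS(A \setminus T)}$ and using $\mu_{\FS(T) + t} = \mu_{\FS(T)} * \mu_{\{t\}}$ gives $\FS(A') = \FS(A) + t$.

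I expect the main obstacle to be the careful multiset bookkeeping in Case B: tracking the lone ``missing'' copy of $-1$ under the action of multiplication by $\alpha \in \cyclicstar{n}$ on $\cyclic{n}$, and checking that the net effect is exactly a shift by $\beta - \alpha$ rather than a more complicated permutation of multiplicities.
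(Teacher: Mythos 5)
Your proposal is correct and follows essentially the same route as the paper's proof: reduce to $\cyclic{n}$ via $\FS(\iota(S))=\iota(\FS(S))$, compute $\FS(U_n)=\{0,1,\dots,2^k-1\}$ reduced mod $n$, split into the cases $2^k\equiv\pm1\pmod n$ (identifying $0$ as the surplus element, resp.\ $-1$ as the deficit element), use $\FS(\alpha U_n)=\alpha\FS(U_n)$ to locate the exceptional element at $0$, resp.\ $-\alpha$, and read off the shift $s=\beta-\alpha$. The ``in particular'' part is likewise handled the same way, via $\FS(B\cup C)=\FS(B)+\FS(C)$ (the convolution identity you wrote is just this Minkowski-sum observation in terms of multiplicity functions), and the bookkeeping concern you flag at the end is in fact already resolved by the count you gave.
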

\begin{proof}
Part (1) is immediate, so we will prove only part (2).  Let $k \in \N$ be minimal such that $2^k \in \{1,-1\}$ modulo $n$, so that $U_n=\{1,2,4, \ldots, 2^{k-1}\}$.  Then $$\FS(U_n)=\{0,1, \ldots, 2^k-1\},$$
and it remains to characterize the image of this set modulo $n$.  If $2^k=1\pmod{n}$, then all nonzero elements of $\cyclic{n}$ get the same multiplicity and $0$ gets multiplicity $1$ greater. 
Since $\FS(\alpha U_n)=\alpha \FS(U_n)$, the same holds if $U_n$ is scaled by an element of $\cyclicstar{n}$, and hence $\FS(\alpha U_n)=\FS(\beta U_n)$ for all choices of $\alpha, \beta\in\cyclicstar{n}$. Then, since $\FS(\iota(\alpha U_n)) = \iota(\FS(\alpha U_n))$, the desired statement holds with $s=0$.

If $2^k=-1\pmod{n}$, then all elements of $\cyclic{n}$ get the same multiplicity  except for $-1$, which has multiplicity $1$ smaller. In $\FS(\alpha U_n)$, the element with smaller multiplicity is $-\alpha$ rather than $-1$, so $\FS(\alpha U_n)=\FS(\beta U_n)+(\beta-\alpha)$, and the desired statement holds with $s=\iota(\beta-\alpha)$.

The final statement of the lemma follows from the observation that $\FS(B \cup C)=FS(B)+\FS(C)$ (where $+$ here denotes the Minkowski sum with multiplicity).
\end{proof}

We are finally ready to prove \cref{prop:(i)-->(ii)-cyclic}. 

\begin{proof}[Proof of \cref{prop:(i)-->(ii)-cyclic}]
By \cref{lem:fshat-surjective}, we know that the image of $\widehat\FS$ has finite index in its codomain.  Hence, \cref{lem:rk-computation} tells us that
    \begin{align*}
        \rk(\ker\widehat\FS) &= \rk(\Z^n) - \rk\Big(\bigoplus_{d\mid n}K_d\Big) = n - \sum_{d\mid n} \rk(K_d) 
        \\
        &= n - 1 - \sum_{\substack{d\mid n\\d\not=1}} \frac{\varphi(d)}2 + \sum_{\substack{d\mid n\\d\not=1}} \Big(\frac{\varphi(d)}{2|U_d|}-1\Big)
        = \frac{n-1}2 + \sum_{\substack{d\mid n\\d\not=1}} \Big(\frac{\varphi(d)}{2|U_d|}-1\Big)
        .
    \end{align*}
\cref{lem:fs-of-moves,lem:fourier} tell us that $n\cdot V(\cyclic{n})$ is a $\Z$-submodule of $\ker\widehat\FS$, and \cref{cor:dimension-v} tells us that $\rk(\ker\widehat\FS) = \rk(V(\cyclic{n})) = \rk(n\cdot V(\cyclic{n}))$. Since all modules in question are free (as they are submodules of $\Z^n$), this implies that there is a natural number $n'\ge 1$ such that $n'\cdot \ker\widehat\FS$ is a submodule of $V(\cyclic{n})$.

Finally, let $A,B$ be finite multisets of $\cyclic{n}$ with $\FS(A)=\FS(B)+s$ for some $s \in \cyclic{n}$, and let $\mu:=\mu_A-\mu_B$.  \cref{lem:fourier} tells us that $n \mu \in \ker \widehat\FS$, so $n'n\mu \in V(\cyclic{n})$.  Since $\mu$ is $\Z$-valued, we conclude that $\mu\in V(\cyclic{n})$.
\end{proof}

\section{From finite cyclic groups to general groups}\label{sec:radon}
We will now prove the implication (i)$\implies$(ii) of \cref{thm:main} for general abelian groups $G$.  We will prove this implication in several stages, for progressively larger classes of abelian groups.

\subsection{\texorpdfstring{From $\cyclic{n}$ to $(\cyclic{n})^r$}{From Z/nZ to powers of Z/nZ}}

The most technical stage of our build-up is deducing information about $(\cyclic{n})^r$ from information about $\cyclic{n}$.

\begin{proposition}\label{prop:(i)-->(ii)-radon}
The implication (i)$\implies$(ii) of \cref{thm:main} holds when $G=(\cyclic{n})^r$ for $n\ge 1$ an odd natural number and $r\ge 1$ a natural number.
\end{proposition}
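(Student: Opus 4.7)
The strategy is to leverage \cref{prop:(i)-->(ii)-cyclic} together with the Radon inversion formula to pass from the one-dimensional case to the multi-dimensional case. Fix finite multisets $A, B$ of $G = (\cyclic{n})^r$ with $\FS(A) = \FS(B) + s$ for some $s \in G$, and set $\mu \defeq \mu_A - \mu_B$; the goal is to conclude $\mu \in V(G)$.

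The first step is to project everything down to $\cyclic{n}$. For any group homomorphism $\psi:G\to\cyclic{n}$, the pushforward multisets $\psi_*A, \psi_*B$ in $\cyclic{n}$ satisfy $\FS(\psi_*A) = \FS(\psi_*B) + \psi(s)$, because $\psi$ commutes with the subset-sum operation: each subset of $\psi_*A$ is the image of a unique subset of $A$, and these correspondences agree on multiplicities. Applying \cref{prop:(i)-->(ii)-cyclic} to each pair then gives $\psi_*\mu \in V(\cyclic{n})$ for every homomorphism $\psi:G\to\cyclic{n}$. Since $G$ is finite, each such $\psi$ has finite kernel, so \cref{lem:v-functoriality}(2) upgrades this to $\psi^*\psi_*\mu \in V(G)$.

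The second step is to recombine these pullbacks via the Radon transform. By \cref{thm:radon-inversion} (carried over as a black box from \cite{CipriettiGlaudo2023}), some nonzero integer multiple $c\mu$ of $\mu$ can be written as a $\Z$-linear combination of the functions $\psi^*\psi_*\mu$ as $\psi$ ranges over the group homomorphisms $G\to\cyclic{n}$. Since $V(G)$ is a $\Z$-module and every summand lies in $V(G)$, this yields $c\mu \in V(G)$.

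Finally, I would remove the integer factor by a saturation argument. The defining conditions of $V(G)$ (see \cref{def:v}) are homogeneous $\Z$-linear equations in $\mu$, so $V(G)$ is saturated inside $\Z^G$: if $c\mu$ satisfies these equations for some nonzero integer $c$, then so does $\mu$. Hence $\mu \in V(G)$, completing the reduction. The main obstacle is step two, which requires \cref{thm:radon-inversion} to produce an \emph{integer} combination recovering a multiple of $\mu$ from the projections; this is exactly why that result is singled out as a key black-box input from \cite{CipriettiGlaudo2023}. Note also that the apparent restriction to $\psi$ with finite kernel poses no difficulty here precisely because $G = (\cyclic{n})^r$ is finite; extending beyond this case will require an additional argument in the next subsection.
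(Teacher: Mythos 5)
Your proposal is correct and takes essentially the same approach as the paper: project via each $\psi\in\Hom((\cyclic{n})^r,\cyclic{n})$, apply \cref{prop:(i)-->(ii)-cyclic} to get $\psi_*\mu\in V(\cyclic{n})$, pull back via \cref{lem:v-functoriality}, recombine by \cref{thm:radon-inversion}, and clear denominators using the fact that the conditions defining $V(G)$ are homogeneous $\Z$-linear (so $V(G)$ is saturated in $\Z^G$). The paper phrases the last step slightly differently---it keeps the rational coefficients from the inversion formula and directly invokes that $\mu$ is $\Z$-valued---but this is the same observation.
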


Our main tool is the discrete Radon transform as introduced and studied in \cite[Section 5]{CipriettiGlaudo2023}.

\begin{definition}[{\cite[Definition 5.1]{CipriettiGlaudo2023}}]\label{def:radon}
    Let $n, r \ge 1$ be natural numbers. The \emph{Radon transform} of a function $f:(\cyclic{n})^r\to\C$, is the function $Rf = R_{n, r}f:\Hom((\cyclic{n})^r,\, \cyclic{n})\times \cyclic{n}\to\C$ given by
    \begin{equation*}
        Rf(\psi, c) \defeq (\psi_*f)(c)=\sum_{\substack{x\in (\cyclic{n})^r\\ \psi(x) = c}} f(x),
    \end{equation*} 
    for all homomorphisms $\psi:(\cyclic{n})^r\to\cyclic{n}$ and all $c\in \cyclic{n}$.
\end{definition}

This operator is called the Radon transform in analogy with the classical Radon transform in $\R^r$ that computes the integral of a given function over affine subspaces.  In $\R^r$, affine subspaces are fibers of linear operators from $\R^r$ to $\R$; we take the latter characterization as motivation for our Radon transform on $(\cyclic{n})^r$.  Indeed, $Rf(\psi, c)$ is the sum (\emph{integral}) of the values of $f$ over a fiber (\emph{hyperplane}) of a homomorphism (\emph{linear operator}) from $(\cyclic{n})^r$ to $\cyclic{n}$. See \cite[Section 5]{CipriettiGlaudo2023} for a more in depth comparison between the discrete Radon transform and the classical Radon transform.

We will make use of the following inversion formula for the Radon transform.

\begin{theorem}[{\cite[Theorem 1.2]{CipriettiGlaudo2023}}]\label{thm:radon-inversion}
    Let $n, r\ge 1$ be natural numbers, and let $f:(\cyclic{n})^r\to\C$ be a function.  Then
    \begin{equation*}
        f(x)
        =
        \frac{1}{n^{r-1}\varphi(n)}
        \sum_{\psi\in \Hom((\cyclic{n})^r, \cyclic{n})}
        Rf(\psi, \psi(x))\prod_{p\mid \psi}
        (1-p^{r-1})
    \end{equation*}
for all $x \in (\cyclic{n})^r$.  Here the product runs over all prime divisors $p$ of $n$ such that $p$ divides all of the elements in the image of $\psi$ (or, equivalently, such that $\psi$ takes values in $p\Z/n\Z$);  $\varphi$ denotes Euler's totient function.
\end{theorem}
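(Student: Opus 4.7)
The plan is to exploit linearity in $f$ and reduce the inversion identity to a purely combinatorial sum. Since both sides are linear in $f$, it suffices to verify the formula for $f = \delta_y$ for each $y \in (\cyclic{n})^r$. For such an $f$ we have $Rf(\psi,\psi(x)) = \mathds{1}[\psi(x) = \psi(y)]$, so after setting $z = x - y$ the identity reduces to the claim that
\begin{equation*}
    \sum_{\psi\in\Hom((\cyclic{n})^r,\cyclic{n})}\mathds{1}[\psi(z)=0]\prod_{p\mid\psi}(1-p^{r-1})
\end{equation*}
equals $n^{r-1}\varphi(n)$ when $z=0$ and vanishes otherwise.

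Next I would parametrize $\psi$ by $a \in (\cyclic{n})^r$ through $\psi_a(x) = a\cdot x$; the condition ``$p\mid\psi$'' then becomes ``each $a_i \in p\cyclic{n}$''. Applying the M\"obius expansion
\begin{equation*}
    \prod_{p\mid n,\ p\mid a}(1-p^{r-1}) = \sum_{\substack{e\mid n,\ e\text{ squarefree}\\ e\mid a}} \mu_{\text{m\"obius}}(e)\, e^{r-1},
\end{equation*}
swapping the order of summation, and substituting $a = ec$ with $c \in (\cyclic{n/e})^r$, the constraint $a\cdot z\equiv 0\pmod n$ becomes $c\cdot z\equiv 0\pmod{n/e}$. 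A standard count shows that the number of such $c$ equals $(n/e)^{r-1}\gcd(z_1,\ldots,z_r,n/e)$: the image of $c\mapsto c\cdot z$ in $\cyclic{n/e}$ is the subgroup generated by the $z_i$, which has order $(n/e)/\gcd(z_1,\ldots,z_r,n/e)$. Since $e^{r-1}(n/e)^{r-1} = n^{r-1}$, the claim reduces to
\begin{equation*}
    \sum_{\substack{e\mid n\\ e\text{ squarefree}}} \mu_{\text{m\"obius}}(e)\,\gcd(z_1,\ldots,z_r,n/e) = \varphi(n)\,\mathds{1}[z=0].
\end{equation*}

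Finally, I would verify this last identity by multiplicativity in $n$. Decomposing $z$ via the Chinese Remainder Theorem according to the prime-power factorization of $n$ makes both sides factor into a product over the primes dividing $n$; at a prime power $p^k$ the only squarefree divisors are $1$ and $p$, so the $p$-factor collapses to $\gcd(z,p^k) - \gcd(z,p^{k-1})$, which equals $p^k - p^{k-1} = \varphi(p^k)$ when $z\equiv 0\pmod{p^k}$ and vanishes otherwise (for $z\not\equiv 0\pmod{p^k}$ both gcds stabilize at the same value). The main technical step is the initial M\"obius expansion together with the reparametrization $a = ec$: these convert the delicate ``$p\mid\psi$'' conditions into a clean gcd sum that yields immediately to multiplicativity.
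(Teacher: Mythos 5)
Your proof is correct. Since the present paper only cites this inversion formula from \cite{CipriettiGlaudo2023} rather than proving it, I can only assess the argument on its own terms, and it holds up: the reduction by linearity to $f = \delta_y$ is legitimate, the identification $Rf(\psi,\psi(x)) = \mathds{1}[\psi(x-y)=0]$ is right, and the parametrization $\psi \leftrightarrow a \in (\cyclic{n})^r$ correctly translates ``$p \mid \psi$'' into ``$p$ divides every $a_i$.'' The M\"obius expansion of $\prod_{p \mid n,\, p \mid a}(1-p^{r-1})$ into $\sum_{e \mid n,\, e \mid a} \mu(e)\,e^{r-1}$ is exactly the inclusion--exclusion identity, and the count of $c \in (\cyclic{n/e})^r$ with $c \cdot z \equiv 0 \pmod{n/e}$ as $(n/e)^{r-1}\gcd(z_1,\dots,z_r,n/e)$ follows from the first isomorphism theorem applied to $c \mapsto c \cdot z$. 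After the cancellation $e^{r-1}(n/e)^{r-1} = n^{r-1}$, the target identity $\sum_{e \mid n} \mu(e)\gcd(z_1,\dots,z_r,n/e) = \varphi(n)\,\mathds{1}[z=0]$ does indeed factor over prime powers of $n$, and the local computation at $p^k$ (factor $= \gcd(z,p^k)-\gcd(z,p^{k-1})$, which is $\varphi(p^k)$ if $p^k \mid z$ and $0$ otherwise, since the two gcds agree once $v_p(z) \le k-1$) is exactly right. This is a clean, self-contained argument; the reduction of the ``$p \mid \psi$'' weighting to a gcd sum via $a = ec$ is the key technical move, and it is executed correctly.
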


The proof of \cref{prop:(i)-->(ii)-radon} combines \cref{prop:(i)-->(ii)-cyclic} (the implication (i)$\implies$(ii) for $G$ finite cyclic), the inversion formula for the Radon transform, and the properties of pushforwards and pullbacks shown in \cref{lem:v-functoriality}.

\begin{proof}[Proof of \cref{prop:(i)-->(ii)-radon}]
For $S$ a finite multiset of $(\cyclic{n})^r$, $\psi \in \Hom((\cyclic{n})^r, \cyclic{n})$, and $c \in \cyclic{n}$, we have
$$R\mu_S(\psi, c)=\psi_*\mu_S(c)=\mu_{\psi(S)}(c).$$
Suppose $A,B$ are finite multisets of $(\cyclic{n})^r$ such that $\FS(A)=\FS(B)+s$ for some $s \in (\cyclic{n})^r$.  Write $\mu\defeq \mu_A-\mu_B$.  Then, for each $\psi \in \Hom((\cyclic{n})^r, \cyclic{n})$, we have $$\FS(\psi(A))=\psi(\FS(A))=\psi(\FS(B)+s)=\psi(\FS(B))+\psi(s)=\FS(\psi(B))+\psi(s).$$
\cref{prop:(i)-->(ii)-cyclic} tells us that $$\psi_*\mu=R\mu(\psi, \emptyparam)=\mu_{\psi(A)}-\mu_{\psi(B)} \in V(\cyclic{n}),$$
while \cref{thm:radon-inversion} tells us that
$$\mu = \sum_{\psi\in \Hom((\cyclic{n})^r, \cyclic{n})} \lambda_\psi \psi^*\psi_*\mu $$
for some rational constants $\lambda_\psi$. Applying \cref{lem:v-functoriality} and recalling that $\mu$ is $\Z$-valued, we deduce that $\mu\in V((\cyclic{n})^r)$.
\end{proof}

\subsection{Finite abelian groups}
The following lemma, when combined with \cref{prop:(i)-->(ii)-radon}, establishes the implication (i)$\implies$(ii) of \cref{thm:main} for all finite abelian groups $G$ with no $2$-torsion.

\begin{lemma}\label{lem:subgroup}
Let $G$ be an abelian group with no elements of order $2$, and let $G'$ be a subgroup of $G$.  If the implication (i)$\implies$(ii) of \cref{thm:main} holds for $G$, then it also holds for $G'$.
\end{lemma}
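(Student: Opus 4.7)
The plan is to reduce directly to the hypothesis by viewing multisets of $G'$ as multisets of the ambient group $G$, then invoke the pullback functoriality (\cref{lem:v-functoriality}(2)) to transfer the conclusion back to $G'$.

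Specifically, suppose $A, B$ are finite multisets of $G'$ with $\FS(A) = \FS(B) + s$ for some $s \in G'$, and write $\mu \defeq \mu_A - \mu_B$ as a function $G' \to \Z$. Using the inclusion homomorphism $\iota: G' \hookrightarrow G$, view $A, B$ as multisets of $G$; their multiplicity functions on $G$ are just the extensions of $\mu_A, \mu_B$ by zero outside $G'$, and the identity $\FS(A) = \FS(B) + s$ (with $s \in G' \subseteq G$) continues to hold in $G$.

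By the hypothesis that the implication (i)$\implies$(ii) of \cref{thm:main} holds for $G$, the function $\tilde\mu: G \to \Z$ defined by extending $\mu$ by zero belongs to $V(G)$. Now $\iota$ has trivial (hence finite) kernel, so \cref{lem:v-functoriality}(2) guarantees that $\iota^* \tilde\mu \in V(G')$. But $\iota^* \tilde\mu = \tilde\mu \circ \iota$ is precisely the restriction of $\tilde\mu$ to $G'$, which equals $\mu$. Hence $\mu \in V(G')$, as needed.

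There is no serious obstacle: the entire lemma amounts to the observation that the defining conditions of $V(G)$ (the doubling identity, the subgroup-sum vanishing, and finite support) restrict cleanly along inclusions, and this is exactly the content of the pullback half of \cref{lem:v-functoriality}. The only point to check carefully is that a subgroup $H \le G'$ is also a subgroup of $G$, so the vanishing $\sum_{g \in H}\mu(g) = 0$ is inherited from $\tilde\mu \in V(G)$; this is immediate.
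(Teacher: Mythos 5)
Your proof is correct and follows essentially the same route as the paper: view $A, B$ as multisets of $G$, apply the hypothesis to obtain membership in $V(G)$, and then pull back along the inclusion $G' \hookrightarrow G$ using \cref{lem:v-functoriality}(2). The extra details you spell out (extension by zero, triviality of the kernel, restriction recovering $\mu$) are all correct but implicit in the paper's shorter argument.
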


\begin{proof}
Let $A, B$ be two finite multisets of $G'$ such that $\FS(A)=\FS(B)+s$. Then, by interpreting $A, B$ as multisets of $G$, we find that $\mu_A-\mu_B\in V(G)$.  By applying \cref{lem:v-functoriality} to the embedding of $G'$ into $G$, we deduce that $\mu_A-\mu_B\in V(G')$ as desired.
\end{proof}

\begin{proposition}\label{prop:(i)-->(ii)-finite}
The implication (i)$\implies$(ii) of \cref{thm:main} holds when $G$ is a finite abelian group with no elements of order $2$.
\end{proposition}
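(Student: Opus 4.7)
The plan is to reduce to the case already handled by \cref{prop:(i)-->(ii)-radon} via an embedding argument, using \cref{lem:subgroup}.

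First, I would invoke the structure theorem for finite abelian groups to write $G \cong \cyclic{n_1} \oplus \cyclic{n_2} \oplus \cdots \oplus \cyclic{n_r}$ for some natural numbers $n_1,\ldots,n_r$. Since $G$ has no elements of order $2$, each $n_i$ must be odd. Setting $n \defeq \mathrm{lcm}(n_1,\ldots,n_r)$ gives an odd natural number, and each $\cyclic{n_i}$ embeds into $\cyclic{n}$ (via multiplication by $n/n_i$), so $G$ embeds as a subgroup of $(\cyclic{n})^r$.

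Next, \cref{prop:(i)-->(ii)-radon} gives us the implication (i)$\implies$(ii) of \cref{thm:main} for the ambient group $(\cyclic{n})^r$. Applying \cref{lem:subgroup} to the embedding $G \hookrightarrow (\cyclic{n})^r$ then transfers the implication down to $G$, completing the proof.

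There is no substantial obstacle here: both ingredients (the structure theorem plus the embedding, and the two earlier results) combine in a single line. The only point worth being careful about is ensuring the ambient group is of the form $(\cyclic{n})^r$ with a single common odd modulus $n$, which is why we take the LCM rather than using the individual invariant factors.
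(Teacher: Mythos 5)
Your proof is correct and follows essentially the same route as the paper: the paper also uses the structure theorem to embed $G$ into $(\cyclic{n})^r$ for a single odd modulus $n$ and then invokes \cref{prop:(i)-->(ii)-radon} together with \cref{lem:subgroup}. The only cosmetic difference is that the paper takes the invariant-factor decomposition (so $n_1 \mid \cdots \mid n_r$) and sets $n = n_r$, whereas you take an arbitrary cyclic decomposition and set $n = \mathrm{lcm}(n_1,\ldots,n_r)$; these give the same modulus.
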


\begin{proof}
The Fundamental Theorem of Finitely Generated Abelian groups \cite[Chapter I, Section 8]{lang2002} tells us that there exist odd natural numbers $n_1, \ldots, n_r$ with each $n_i$ dividing $n_{i+1}$ such that $G\cong\cyclic{n_1} \oplus \cdots \oplus \cyclic{n_r}$. Then $G$ is a subgroup of $(\cyclic{n_r})^r$.  \cref{prop:(i)-->(ii)-radon} ensures that the implication (i)$\implies$(ii) of \cref{thm:main} holds for $(\cyclic{n_r})^r$, and then \cref{lem:subgroup} tells us that the implication (i)$\implies$(ii) of \cref{thm:main} also holds for $G$.
\end{proof}

\subsection{Finitely generated abelian groups}

We now extend the results of the previous subsection to finitely generated abelian groups.

\begin{proposition}\label{prop:(i)-->(ii)-finitely-generated}
The implication (i)$\implies$(ii) of \cref{thm:main} holds for every finitely generated abelian group with no elements of order $2$.
\end{proposition}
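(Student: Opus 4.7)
\emph{Strategy.} My plan is to reduce to the finite case handled by \cref{prop:(i)-->(ii)-finite} by passing to finite quotients of $G$. By the Fundamental Theorem of Finitely Generated Abelian Groups, write $G\cong \Z^k\oplus G_T$ where $G_T$ is a finite abelian group (necessarily with no elements of order $2$). For each odd $n\ge 1$, let $\psi_n:G\to (\cyclic{n})^k\oplus G_T$ be the homomorphism that reduces the $\Z^k$-coordinates modulo $n$ and is the identity on $G_T$; the codomain is a finite abelian group with no elements of order $2$. Set $\mu\defeq\mu_A-\mu_B$. Applying $\psi_n$ to the identity $\FS(A)=\FS(B)+s$ yields $\FS(\psi_n(A))=\FS(\psi_n(B))+\psi_n(s)$, so \cref{prop:(i)-->(ii)-finite} gives $(\psi_n)_\ast\mu\in V((\cyclic{n})^k\oplus G_T)$ for every odd $n$. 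The plan is to combine these conclusions, as $n$ varies, to verify that $\mu\in V(G)$.

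\emph{Transferring pointwise conditions.} The key observation is that for any fixed finite set $T\subseteq G$, if $n$ is larger than all the absolute values of $\Z^k$-coordinates of differences of elements of $T\cup\supp(\mu)$, then $\psi_n$ is injective on $T\cup\supp(\mu)$, and consequently $(\psi_n)_\ast\mu(\psi_n(h))=\mu(h)$ for every $h\in T$. Taking $T=\{g,-g,2g,-2g\}$ for a fixed $g\in G$ and applying the first defining relation of $V$ for $(\psi_n)_\ast\mu$ at the point $\psi_n(g)$, we transfer back to the identity $\mu(g)+\mu(-g)=\mu(2g)+\mu(-2g)$. Since $g$ was arbitrary, this establishes the first condition in the definition of $V(G)$ for $\mu$.

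\emph{Concluding via \cref{lem:v-nontorsion-decomposition}.} For $g\in G\setminus G_T$, the element $g$ has infinite order, so iterating the relation just proved yields $\mu(g)+\mu(-g)=\mu(2^ig)+\mu(-2^ig)$, and the right-hand side vanishes once $i$ is large enough that both $2^ig$ and $-2^ig$ lie outside the finite set $\supp(\mu)$; hence $\mu(g)+\mu(-g)=0$. For the torsion part we verify $\mu|_{G_T}\in V(G_T)$: the first defining relation on $G_T$ is inherited from the previous paragraph, and for the subgroup-sum condition we fix $H\le G_T$ and choose $n$ larger than all $\Z^k$-coordinates appearing in $\supp(\mu)$, so that $(\psi_n)_\ast\mu(g)=\mu(g)$ for every $g\in G_T$; since $\psi_n$ is the identity on $G_T$, the subgroup $H$ sits inside $(\cyclic{n})^k\oplus G_T$, and the corresponding subgroup-sum condition for $(\psi_n)_\ast\mu$ yields $\sum_{g\in H}\mu(g)=0$. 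Together these facts verify the hypotheses of \cref{lem:v-nontorsion-decomposition}, so $\mu\in V(G)$. The main technical point --- coordinating the $n$-choices --- is routine since each verification involves only finitely many points of $G$; there is no substantive obstacle beyond this bookkeeping.
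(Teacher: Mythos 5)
Your proof is correct, and it takes a route genuinely different from the paper's main argument, which instead proves the auxiliary \cref{prop:(i)-->(ii)-times-Z} (``$(i)\Rightarrow(ii)$ for $G$ implies $(i)\Rightarrow(ii)$ for $G\oplus\Z$'') by an inductive ``slicing'' argument on $|A|+|B|$ and then applies it $r$ times starting from the finite case. What you do instead is project $G\cong\Z^k\oplus G_T$ to the finite groups $(\cyclic{n})^k\oplus G_T$, import the finite case, and transfer the defining relations of $V$ back to $G$ by choosing $n$ large enough to make $\psi_n$ injective on the relevant finite piece of $G$. This is essentially a fleshed-out version of the paper's \emph{Remark} 4.7, which sketches the same idea using a single modulus $M=4N+1$ and leaves the ``direct computation'' implicit. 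Your version improves on the remark in two ways: you use a family of moduli instead of committing to one fixed $M$, which spares you from verifying that a single modulus handles all four points $\{g,-g,2g,-2g\}$ simultaneously; and you explicitly invoke \cref{lem:v-nontorsion-decomposition} to reduce the subgroup-sum condition to subgroups of the finite torsion part $G_T$, which is exactly the right way to deal with infinite subgroups $H\le G$ (for which the na\"ive transfer fails, since $\psi_n^{-1}(\psi_n(H))=H+n\Z^k$ can pick up spurious support points of $\mu$). The trade-off relative to the paper's main proof: yours is shorter and avoids the combinatorial slicing-plus-induction machinery of \cref{prop:(i)-->(ii)-times-Z}, but that proposition is more self-contained in that it does not rely on the torsion decomposition of $V$; the paper keeps both arguments, promoting the inductive one and relegating yours to a remark.

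One small stylistic caveat, which does not affect correctness: when you write ``$(\psi_n)_*\mu(g)=\mu(g)$ for every $g\in G_T$'' and ``the subgroup $H$ sits inside $(\cyclic{n})^k\oplus G_T$,'' you are tacitly identifying $G_T$ with its isomorphic image $\{0\}\oplus G_T$ under $\psi_n$, and you should note that $\psi_n^{-1}(\psi_n(H))\cap\supp(\mu)=H\cap\supp(\mu)$ once $n$ exceeds the $\Z^k$-coordinates appearing in $\supp(\mu)$ — that is the step which turns $\sum_{g'\in\psi_n(H)}(\psi_n)_*\mu(g')=0$ into $\sum_{g\in H}\mu(g)=0$. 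You clearly have this in mind, but it is worth writing out.
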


We will show that if the implication (i)$\implies$(ii) holds for an abelian group $G$, then it also holds for $G \times \Z$; by the Fundamental Theorem of Finitely Generated Abelian Groups, this will imply that (i)$\implies$(ii) holds for all finitely generated abelian groups.  We will require the following lemma.

\begin{lemma}[{\cite[Lemma 6.1]{CipriettiGlaudo2023}}]\label{lem:remove-common-summand}
Let $G$ be an abelian group with no elements of order $2$.  If $A, A', B$ are finite multisets of $G$ such that $A+\FS(B)=A'+\FS(B)$, then $A=A'$.
\end{lemma}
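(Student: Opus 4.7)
The plan is to translate the multiset equation into an equation of multiplicity functions in the group ring $\Z[G]$, under which Minkowski sum with multiplicity corresponds to convolution. The hypothesis $A+\FS(B)=A'+\FS(B)$ becomes
\[
(\mu_A-\mu_{A'})*\mu_{\FS(B)}=0,
\]
and the factorization $\mu_{\FS(B)}=\prod_{b\in B}(\mu_{\{0\}}+\mu_{\{b\}})$ recalled just before \cref{lem:fourier} (convolution product, each $b$ taken with its multiplicity) reduces the claim to showing that each factor $\mu_{\{0\}}+\mu_{\{b\}}$ is a non-zero-divisor in $\Z[G]$. Cancelling the factors one at a time will then yield $\mu_A=\mu_{A'}$, i.e., $A=A'$.

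To establish the non-zero-divisor property, I would split on whether $b$ has finite or infinite order. If $b$ has finite order $n$, then the no-$2$-torsion hypothesis forces $n=2k+1$ to be odd, and the telescoping identity
\[
(\mu_{\{0\}}+\mu_{\{b\}})*\sum_{i=0}^{2k}(-1)^i\mu_{\{ib\}}=\mu_{\{0\}}+\mu_{\{nb\}}=2\mu_{\{0\}}
\]
shows that if $(\mu_{\{0\}}+\mu_{\{b\}})*h=0$ then $2h=0$, whence $h=0$ since $\Z[G]$ is torsion-free as a $\Z$-module. If instead $b$ has infinite order and $(\mu_{\{0\}}+\mu_{\{b\}})*h=0$, then evaluating at $x\in G$ yields the recursion $h(x)+h(x-b)=0$; iterating gives $h(x-kb)=(-1)^k h(x)$ for every $k\in\Z$. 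Since the points $\{x-kb:k\in\Z\}$ are pairwise distinct and $h$ has finite support, we must have $h(x)=0$ for all $x$.

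There is no serious obstacle here: the argument is short and purely algebraic. The role of the no-$2$-torsion assumption is essential, as it forces the order of $b$ to be odd in the torsion case (so that $\mu_{\{nb\}}=\mu_{\{0\}}$ and the right-hand side becomes $2\mu_{\{0\}}$ rather than $0$) and it guarantees that the orbit $\{x-kb:k\in\Z\}$ is infinite in the torsion-free case, which is what lets the finite-support condition on $h$ complete the argument.
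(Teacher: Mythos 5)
Your argument is correct and self-contained. The present paper does not reprove this lemma; it simply cites \cite[Lemma~6.1]{CipriettiGlaudo2023}, so there is no in-paper proof to compare against. Your reduction to showing that each $\mu_{\{0\}}+\mu_{\{b\}}$ is a non-zero-divisor in $\Z[G]$ is exactly the right move, and both case analyses are sound: for $b$ of finite order, the no-$2$-torsion hypothesis forces $\ord(b)=n$ odd, the telescoping identity $(1+x)\sum_{i=0}^{n-1}(-1)^i x^i = 1+x^n = 2$ in $\Z[G]$ (with $x=\mu_{\{b\}}$) gives $2h=0$, and torsion-freeness of $\Z[G]$ over $\Z$ finishes; for $b$ of infinite order, the recursion $h(x-kb)=(-1)^k h(x)$ combined with the finite support of $h$ forces $h\equiv 0$. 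In \cite{CipriettiGlaudo2023} the authors prove essentially this statement by a related algebraic cancellation argument; your version is a clean, direct group-ring formulation that works uniformly for all abelian $G$ without $2$-torsion, avoiding any Fourier-analytic reduction or truncation to finite quotients, which is the main thing it buys over, say, arguing character by character (which would only apply directly to finite $G$).
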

We remark that in this lemma it is essential to include the hypothesis that $G$ has no $2$-torsion.

\begin{corollary}\label{cor:remove-common-summand-shift}
    Let $G$ be an abelian group with no elements of order $2$.
    Let $A'\subseteq A$ and $B'\subseteq B$ be finite multisets of $G$ such that $\FS(A)$ is a shift of $\FS(B)$ and $\FS(A')$ is a shift of $\FS(B')$. Then $\FS(A\setminus A')$ is a shift of $\FS(B\setminus B')$. 
\end{corollary}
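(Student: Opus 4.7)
The plan is to reduce the claim to the cancellation lemma (Lemma~\ref{lem:remove-common-summand}) via the multiplicativity of $\FS$ under disjoint multiset union. The key identity is that if $A$ is the disjoint multiset union of $A'$ and $A \setminus A'$, then
\begin{equation*}
    \FS(A) = \FS(A') + \FS(A \setminus A'),
\end{equation*}
where $+$ denotes the Minkowski sum of multisets (with multiplicities). This holds because every subset of $A$ decomposes uniquely as a subset of $A'$ together with a subset of $A \setminus A'$, and the corresponding subset sums add. The analogous identity holds for $B = B' \sqcup (B \setminus B')$.

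With these decompositions in hand, I would substitute the two shift hypotheses $\FS(A) = \FS(B) + s$ and $\FS(A') = \FS(B') + s'$. Using that Minkowski sum commutes with shifts, a short rearrangement gives
\begin{equation*}
    \FS(A') + \FS(A \setminus A') \;=\; \FS(A') + \bigl(\FS(B \setminus B') + (s - s')\bigr).
\end{equation*}
Now \cref{lem:remove-common-summand}, applied with its ``$B$'' taken to be the multiset $A'$, lets us cancel the common summand $\FS(A')$ from both sides. (This is precisely where the no-$2$-torsion hypothesis on $G$ enters, through the quoted lemma.) The conclusion is $\FS(A \setminus A') = \FS(B \setminus B') + (s - s')$, which is the desired shift relation with shift $s - s'$.

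There is essentially no obstacle here: the argument is a direct manipulation, and the only care needed is the bookkeeping to ensure that Minkowski sums and shifts are handled correctly. The substantive content has already been packaged into \cref{lem:remove-common-summand}, so this corollary is really just the statement that ``\,$\FS$ is cancellative under Minkowski addition\,'' applied to the factorization of $\FS(A)$ and $\FS(B)$ coming from the sub-multisets $A'$ and $B'$.
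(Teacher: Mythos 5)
Your proposal is correct and matches the paper's proof essentially line for line: both decompose $\FS(A)=\FS(A')+\FS(A\setminus A')$ and $\FS(B)=\FS(B')+\FS(B\setminus B')$ via Minkowski sum, substitute the two shift hypotheses, and cancel the common summand $\FS(A')$ using \cref{lem:remove-common-summand} to arrive at the shift $s-s'$. Nothing further to add.
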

\begin{proof}
    Take $s, s'\in G$ such that $\FS(A)=\FS(B)+s$ and $\FS(A')=\FS(B')+s'$. Then
    \begin{align*}
        \FS(A\setminus A') + \FS(A') &= \FS(A) = \FS(B) + s
        \\
        &= \FS(B\setminus B') + \FS(B') + s = \FS(B\setminus B') + (s-s') + \FS(A'),
    \end{align*}
    and, applying \cref{lem:remove-common-summand}, we deduce that $\FS(A\setminus A') = \FS(B\setminus B') + (s-s')$.
\end{proof}

\begin{proposition}\label{prop:(i)-->(ii)-times-Z}
Let $G$ be an abelian group with no elements of order $2$.  If the implication (i)$\implies$(ii) of \cref{thm:main} holds for $G$, then it also holds for $G\oplus\Z$.
\end{proposition}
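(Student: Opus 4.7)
The plan is to reduce the problem for $G \oplus \Z$ to the hypothesis on $G$ by first peeling off the $\Z$-coordinate zero slice and then handling the remainder via a reflection trick. Given $A, B$ finite multisets of $G \oplus \Z$ with $\FS(A) = \FS(B) + s$, write $A_0, B_0$ for the sub-multisets of $A, B$ consisting of elements with $\Z$-coordinate $0$ (viewed as multisets of $G$). A direct calculation shows that the top slice of $\FS(A)$, namely its restriction to the maximum $\Z$-coordinate $K^+_A \defeq \sum_{k > 0} k|A_k|$, is a translate of $\FS(A_0)$, and similarly for $B$. Matching top slices in $\FS(A) = \FS(B) + s$ produces $s_0 \in G$ with $\FS(A_0) = \FS(B_0) + s_0$. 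The hypothesis on $G$ then gives $\mu_{A_0} - \mu_{B_0} \in V(G)$, which pushes forward to $V(G \oplus \Z)$ via \cref{lem:v-functoriality}. By \cref{cor:remove-common-summand-shift}, the remainders $\tilde A \defeq A \setminus A_0$, $\tilde B \defeq B \setminus B_0$, both now with only nonzero $\Z$-coordinates, satisfy $\FS(\tilde A) = \FS(\tilde B) + \tilde s$ for some $\tilde s$.

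To finish, I show the symmetrized multiplicity identity $\mu_{\tilde A}(g, k) + \mu_{\tilde A}(-g, -k) = \mu_{\tilde B}(g, k) + \mu_{\tilde B}(-g, -k)$ for all $(g, k)$; this is exactly the antisymmetry on nontorsion elements needed (via \cref{lem:v-nontorsion-decomposition}) to place the remaining contribution in $V(G \oplus \Z)$. Define the \emph{reflection} $\tilde A'$ of $\tilde A$ by replacing each $(g, k) \in \tilde A$ with $k > 0$ by $(-g, -k)$ and fixing elements with $k < 0$, so that $\tilde A' \subseteq G \oplus \Z_{<0}$. A direct bijection between subsets of $\tilde A$ and subsets of $\tilde A'$ (``complement on the positive-$k$ part, identity on the negative-$k$ part'') yields
\begin{equation*}
\mu_{\FS(\tilde A')}(h, -d) = \mu_{\FS(\tilde A)}\bigl(g_0^A + h,\, K^+_A - d\bigr),
\end{equation*}
where $g_0^A \defeq \sum_{k > 0} \sum A_k$, together with the analogous identity for $\tilde B$. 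Combining these with $\FS(\tilde A) = \FS(\tilde B) + \tilde s$ and the top-slice relations $g_0^A = g_0^B + \tilde s_1$, $K^+_A = K^+_B + \tilde s_2$, the identity collapses to $\FS(\tilde A') = \FS(\tilde B')$ with no shift.

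The last ingredient is a unique-reconstruction statement: if $X, Y \subseteq G \oplus \Z_{<0}$ satisfy $\FS(X) = \FS(Y)$, then $X = Y$. Indeed, the maximum $\Z$-coordinate of $\FS(X)$ is $0$ (from the empty subset), and the next-largest equals the maximum $\Z$-coordinate $k^*$ appearing in $X$; the $k^*$-slice of $\FS(X)$ is precisely $X_{k^*}$, so $X_{k^*} = Y_{k^*}$. \cref{lem:remove-common-summand} then strips the common $\FS(X_{k^*})$ factor from $\FS(X) = \FS(Y)$ to give $\FS(X \setminus X_{k^*}) = \FS(Y \setminus Y_{k^*})$, and we induct on size. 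Applying this to $\tilde A'$ and $\tilde B'$ yields $\tilde A' = \tilde B'$, which unwinds via the definition of the reflection to the symmetrized multiplicity equality and, combined with the $A_0, B_0$ contribution, shows $\mu_A - \mu_B \in V(G \oplus \Z)$. The main technical obstacle is the slice-matching identity relating $\FS(\tilde A')$ to $\FS(\tilde A)$; the remaining ingredients are combinatorial bookkeeping plus a single invocation of the hypothesis for $G$.
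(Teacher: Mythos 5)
Your proposal is correct, and it takes a genuinely different route from the paper's proof after the common first step. Both proofs begin the same way: match the top $\Z$-slices of $\FS(A)$ and $\FS(B)+s$ to get $\FS(A_0)=\FS(B_0)+s_0$, invoke the hypothesis on $G$ to place $\mu_{A_0}-\mu_{B_0}$ in $V(G\oplus\Z)$, and strip off $A_0,B_0$ via \cref{cor:remove-common-summand-shift} to reduce to multisets $\tilde A,\tilde B$ supported off the zero slice. The paper then proceeds by induction on $|A|$: if $A_0,B_0$ are nonempty the induction closes immediately, and if they are empty it locates, by inspecting the top two slices, a specific element $p$ with $A$ containing $p$ or $-p$ (and likewise for $B$), removes one such element from each side, and inducts again. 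You instead handle the entire remainder $\tilde A,\tilde B$ in one stroke: reflect positive-$\Z$-coordinate elements to land everything in $G\oplus\Z_{<0}$, use the complement bijection on the positive part to show $\FS(\tilde A')=\FS(\tilde B')$ with the shift absorbed (since $\tilde s=(g_0^A-g_0^B,\,K_A^+-K_B^+)$, which is forced by matching the unique top points of $\FS(\tilde A)$ and $\FS(\tilde B)$), and then apply a unique-reconstruction statement for multisets in $G\oplus\Z_{<0}$ (whose own short induction replaces the paper's global one). The resulting identity $\tilde A'=\tilde B'$ unwinds precisely to the antisymmetry $\tilde\mu(x)+\tilde\mu(-x)=0$ for $\tilde\mu:=\mu_{\tilde A}-\mu_{\tilde B}$, which combined with $\tilde\mu|_{(G\oplus\Z)_T}=0$ puts $\tilde\mu\in V(G\oplus\Z)$ by the surjectivity direction of \cref{lem:v-nontorsion-decomposition}. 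Your approach avoids the paper's case split and its ``$p$ or $-p$'' observation, trading them for the reflection identity and the auxiliary uniqueness lemma; both are elegant, and yours arguably makes more structurally transparent why the $\Z$-factor contributes only ``antisymmetric'' elements to $V(G\oplus\Z)$.
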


\begin{proof}
For $S$ a finite multiset of $G \oplus \Z$ and $z \in \Z$, we define the ``slice'' of $S$ at height $z$ to be the set
$$S_z\defeq S \cap (G \times \{z\}).$$
Let $A,B$ be finite multisets of $G \oplus \Z$ such that $\FS(A)=\FS(B)+(s,y)$ for some $s \in G$ and $y \in \Z$.  We will show by induction on the size of $A,B$ that $\mu_A-\mu_B\in V(G\oplus\Z)$.  Let $w$ be the largest integer such that $\FS(A)_w$ is nonempty (equivalently, such that $\FS(B)_{w-y}$ is nonempty).  Then $\FS(A)_w=(g,w)+\FS(A_0)$ for some $g \in G$, and $\FS(B)_{w-y}=(h,w-y)+\FS(B_0)$ for some $h \in G$.  Hence
$$(g,w)+\FS(A_0)=\FS(A)_w=\FS(B)_{w-y}+(s,y)=(h,w-y)+\FS(B_0)+(s,y),$$
i.e.,
$$\FS(A_0)=\FS(B_0)+(s+h-g,0).$$
Viewing $A_0, B_0$ as multisets of $G$ and using our assumption on $G$, we conclude (applying \cref{lem:v-functoriality} to the embedding of $G$ into $G \oplus \Z$) that $\mu_{A_0}-\mu_{B_0} \in V(G \oplus \Z)$.
\cref{cor:remove-common-summand-shift} tells us that $\FS(A \setminus A_0)$ is still a shift of $\FS(B \setminus B_0)$.
If $A_0, B_0$ are nonempty, then our induction hypothesis tells us that $\mu_{A \setminus A_0}-\mu_{B \setminus B_0} \in V(G \oplus \Z)$, and we conclude that $\mu_A-\mu_B \in V(G \oplus \Z)$, as desired.  So it remains to consider the case where $A_0, B_0$ are empty.

In this case, $A_w=\{(g,w)\}$ consists of only a single element.  Let $w'<w$ denote the second-largest integer such that $\FS(A)_{w'}$ is nonempty; such a $w'$ exists since otherwise we would have $A \subseteq G \times \{0\}$, contrary to our assumption that $A_0$ is empty.  Pick any $(g',w')$ in $A_{w'}$.  The key observation is that $A$ must contain either $p\defeq (g-g',w-w')$ or $-p=-(g-g',w-w')$.  Indeed, $(g,w)$ is the sum of all elements in $A$ with positive second coordinate, and $(g', w')$ is obtained from $(g,w)$ either by removing from this sum an element with positive second coordinate (namely, $p$) or by adding to this sum an element of $A$ with negative second coordinate (namely, $-p$).
Since $\FS(B)$ is a translate of $\FS(A)$, the same argument shows that $B$ also contains either $p$ or $-p$.  
Choose $a,b\in \{p, -p\}$ such that $a\in A$ and $b\in B$. Notice that either $a=b$ or $a=-b$. 
Either way, we have (see \cref{cor:v-generated-by-moves}) 
$$\mu_{\{a\}}-\mu_{\{b\}} \in V(G\oplus \Z).$$  
Moreover, \cref{cor:remove-common-summand-shift} tells us that $\FS(A \setminus \{a\})$ is still a shift of $\FS(B \setminus \{b\})$, so 
$$\mu_{A \setminus \{a\}}-\mu_{B \setminus \{b\}} \in V(G \oplus \Z)$$ 
by induction.  Summing the previous two expressions gives $\mu_A-\mu_B \in V(G \oplus \Z)$.
\end{proof}

The proof of \cref{prop:(i)-->(ii)-finitely-generated} is a simple combination of \cref{prop:(i)-->(ii)-finite,prop:(i)-->(ii)-times-Z}.
\begin{proof}[Proof of \cref{prop:(i)-->(ii)-finitely-generated}]
    The Fundamental Theorem of Finitely Generated Abelian Groups \cite[Chapter I, Section 8]{lang2002} tells us that a finitely generated abelian group $G$ without elements of order $2$ can be written as $G\cong G' \oplus \Z^r$ where $G'$ is a finite group (with no elements of order $2$) and $r$ is a nonnegative integer. 
    \cref{prop:(i)-->(ii)-finite} tells us that the implication (i)$\implies$(ii) of \cref{thm:main} holds for $G'$, then applying \cref{prop:(i)-->(ii)-times-Z} $r$ times we obtain that it holds also for $G$.
\end{proof}

\begin{remark}
Let us sketch an alternative proof of \cref{prop:(i)-->(ii)-finitely-generated} which is based on a standard \emph{truncation} argument.  We can write our finitely generated abelian group as $G=G'\times \Z^r$ where $G'$ is finite. Let $A, B$ be finite multisets of $G$ such that $\FS(A)=\FS(B)+s$ for some $s\in G$. Choose some large $N\in\N$ such that $A, B\subseteq G'\times [-N, N]^r$, and define $M=4N+1$. Consider the projection $\pi:G\cong G'\oplus\Z^r\to G'\oplus (\cyclic{M})^r$ which is the identity on the first factor and the canonical projection on the second factor.  Notice that $\pi$ is injective on the support of $\mu_A-\mu_B$. Thanks to \cref{prop:(i)-->(ii)-finite}, we know that $\pi_*(\mu_A-\mu_B)\in V(G'\oplus (\cyclic{M})^r)$. This in turn implies (by direct computation) that $\mu_A-\mu_B \in V(G'\oplus \Z^r)$.
\end{remark}

\subsection{General abelian groups}\label{sec:general-ab}
Finally, we treat the case of general abelian groups.

\begin{proposition}\label{prop:prop:(i)-->(ii)-all}
The implication (i)$\implies$(ii) of \cref{thm:main} holds for every abelian group with no elements of order $2$.
\end{proposition}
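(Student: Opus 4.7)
The plan is to reduce the general case to the finitely generated case (\cref{prop:(i)-->(ii)-finitely-generated}) via a single application of pushforward functoriality (\cref{lem:v-functoriality}). The key observation is that although $G$ may be enormous, the data $A$, $B$ lives inside a finitely generated subgroup, and the condition $\mu_A-\mu_B\in V(G)$ only ``sees'' this subgroup up to extension by zero.

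Concretely, suppose $A, B$ are finite multisets of $G$ with $\FS(A)=\FS(B)+s$ for some $s\in G$. I would first let $G'$ be the subgroup of $G$ generated by the finite set $A\cup B$. Then $G'$ is finitely generated, and, as a subgroup of the $2$-torsion-free group $G$, it is itself $2$-torsion-free. Since $\FS(A)\subseteq G'$ and $\FS(B)\subseteq G'$, the shift $s$ lies in $G'$ (it is the difference of any element of $\FS(A)$ with the corresponding element of $\FS(B)$). Therefore, viewing $A, B$ as finite multisets of $G'$, we have $\FS(A)=\FS(B)+s$ inside $G'$ as well.

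By \cref{prop:(i)-->(ii)-finitely-generated} applied to $G'$, we conclude that $\mu_A-\mu_B\in V(G')$. Now consider the inclusion homomorphism $\iota\colon G'\hookrightarrow G$. Because $\iota$ is injective, the pushforward $\iota_*(\mu_A-\mu_B)$ is simply the extension by zero of $\mu_A-\mu_B$ from $G'$ to $G$, which agrees with the function $\mu_A-\mu_B$ computed on $G$ (both vanish off $G'$). \cref{lem:v-functoriality}(1) then yields $\mu_A-\mu_B\in V(G)$, which is exactly (ii).

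There is no real obstacle here; the entire content of this step is the choice of $G'$ and the observation that pushforward along an inclusion is extension by zero. The one detail worth double-checking during the write-up is that $s\in G'$ (so that the hypothesis of the finitely generated case actually applies to $G'$, not just to some larger group), which follows immediately from $\FS(A), \FS(B)\subseteq G'$.
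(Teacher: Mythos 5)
Your proof is correct and follows essentially the same route as the paper: restrict to the finitely generated subgroup $G'$ generated by $A$ and $B$, invoke \cref{prop:(i)-->(ii)-finitely-generated} there, and then use pushforward functoriality (\cref{lem:v-functoriality}) along the inclusion $G'\hookrightarrow G$ to conclude. The only difference is that you spell out a couple of small sanity checks (that $G'$ inherits $2$-torsion-freeness and that $s\in G'$) which the paper leaves implicit.
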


\begin{proof}
Let $G$ be an abelian group with no elements of order $2$.  Let $A,B$ be finite multisets of $G$ such that $\FS(A)=\FS(B)+s$ for some $s \in G$.  Let $G'$ be the subgroup generated by the elements of $A$ and $B$; clearly $G'$ is finitely generated, and we can consider $A,B$ as multisets of $G'$. \cref{prop:(i)-->(ii)-finitely-generated} tells us that $\mu_A-\mu_B\in V(G')$, and then \cref{lem:v-functoriality} (applied to the embedding of $G'$ into $G$) implies that $\mu_A-\mu_B\in V(G)$.
\end{proof}

\section{Final deduction of the main theorems}\label{sec:final}
We now complete the circle for the proof of \cref{thm:main}.

\begin{proof}[Proof of \cref{thm:main}]
The implication (i)$\implies$(ii) is \cref{prop:prop:(i)-->(ii)-all}; the implication (ii)$\implies$(iii) follows from \cref{prop:v-obtained-by-moves,lem:fs-of-moves}; and the implication (iii)$\implies$(i) is immediate.
\end{proof}

We also deduce \cref{thm:main-no-shift}, the no-shift version of \cref{thm:main}.

\begin{proof}[Proof of \cref{thm:main-no-shift}]
Let $A,B$ be finite multisets of $G$.  Each of the conditions in \cref{thm:main-no-shift} implies the corresponding condition in \cref{thm:main}, so it suffices to consider the case where $\FS(A)=\FS(B)+s$ for some $s$ in $G$; suppose this is the case. 

Condition (i) asserts that $s=0$. To handle condition (iii), notice that $s=s_0+\cdots+s_{m-1}$.  To handle condition (ii), it remains to show that $s=0$ if and only if $\sum_{g\in G} \mu(g)g=0$, i.e., if and only if $\sum A=\sum B$.  Let $q\defeq |A|=|B|$.  Then
$$2^{q-1}\sum A=\sum \FS(A)=\sum \FS(B)+2^qs=2^{q-1}\sum B+2^qs,$$
where the first equality holds because each element of $A$ belongs to exactly half of the $2^q$ subset sums of $A$ (and likewise for the last equality).  So $s=0$ if and only if $\sum A=\sum B$.
\end{proof}



\section*{Acknowledgments} 
We thank the anonymous referee for their comments on an earlier version of this paper.

\bibliographystyle{amsplain}


\begin{dajauthors}
\begin{authorinfo}[fede]
  Federico Glaudo\\
  School of Mathematics, Institute for Advanced Study\\
  Princeton, NJ, USA\\
  fglaudo \imageat{}ias \imagedot{}edu
\end{authorinfo}
\begin{authorinfo}[noah]
  Noah Kravitz\\
  Department of Mathematics, Princeton University\\
  Princeton, NJ, USA\\
  nkravitz\imageat{}princeton\imagedot{}edu
\end{authorinfo}
\end{dajauthors}

\end{document}